\begin{document}
\title[\hfil Numerical identification of the time-dependent coefficient]
{Numerical Identification of a Time-Dependent Coefficient in a Time-Fractional Diffusion Equation with Integral Constraints}

\author[Arshyn Altybay]{Arshyn Altybay}
\address{
Arshyn Altybay: 
\endgraf
Institute of Mathematics and Mathematical Modeling,
\endgraf
 125 Pushkin str., 050010 Almaty, Kazakhstan
\endgraf
and
\endgraf
Department of Mathematics: Analysis, Logic and Discrete Mathematics, 
  \endgraf
Ghent University, Krijgslaan 281, Building S8, B 9000 Ghent, Belgium
  \endgraf
{\it E-mail address} {\rm arshyn.altybay@gmail.com, arshyn.altybay@ugent.be}
}

\thanks{}
\subjclass[]{35R30, 35B45, 65M06, 65M32} \keywords{inverse problem, fractional diffusion equation, Caputo fractional derivative,  numerical analysis, integral overdetermination condition}

\begin{abstract}
In this paper, we numerically address the inverse problem of identifying a time-dependent coefficient in the time-fractional diffusion equation. An a priori estimate is established to ensure uniqueness and stability of the solution. A fully implicit finite-difference scheme is proposed and rigorously analysed for stability and convergence. An efficient algorithm based on an integral formulation is implemented and verified through numerical experiments, demonstrating accuracy and robustness under noisy data.
\end{abstract}

\maketitle
\numberwithin{equation}{section} 
\newtheorem{theorem}{Theorem}[section]
\newtheorem{lemma}[theorem]{Lemma}
\newtheorem{corollary}[theorem]{Corollary}
\newtheorem{remark}[theorem]{Remark}
\newtheorem{definition}[theorem]
{Definition}
\newtheorem{proposition}[theorem]
{Proposition}
\allowdisplaybreaks


\section{Introduction}\label{1}
\noindent
We consider the inverse problem of identifying the pair of functions $\{p,u\}$ for the fractional diffusion equation
\begin{equation}\label{1.1}
\partial_t^{\alpha} u(x,t) - u_{xx}(x,t) + p(t)u(x,t) = f(x,t), \quad (x,t)\in Q_T,
\end{equation}
posed on the domain $Q_T:=\{(x,t):\, 0<x<l,\; 0\le t\le T\}$ with $T>0$, subject to the initial condition
\begin{equation}\label{1.2}
u(x,0)=\varphi(x), \quad x\in[0,l].
\end{equation}
and the following Dirichlet boundary conditions
\begin{equation}\label{1.3}
\begin{aligned}
u(0,t) = 0, \  t \in [0,T], \quad  u(l,t) = 0, \  t \in [0,T],
\end{aligned}
\end{equation}
and the integral overdetermination condition
\begin{equation}\label{1.4}
\int_0^lu(x,t)\omega(x) dx = g(t), \ t \in [0,T],
\end{equation}
and $f(x,t),\ \varphi(x),  \omega(x), \ g(t)$ are given functions.
Here, $u(x, t)$ represents the temperature, $p(t)$ describes the heat capacity, $ f(x, t)$ is a source function, $\partial^{\alpha}_{t}$ is the Caputo fractional derivative of order $\alpha \in (0,1)$ defined in \cite{A} as
\[
\partial^{\alpha}_{t}u(x,t)=\frac{1}{\Gamma(1-\alpha)}\int\limits_{0}^{t}\frac{u_s(x,s)}{(t-s)^{\alpha}}ds,
\]
where $\Gamma(\cdot)$ is the Gamma function. 

For a given sufficiently smooth $p$ and $f$, direct problem involves determining 
$u$ in $\overline{Q}_T$ such that 
$u(\cdot,t)\in C^2[0,l]$
and $u(x,\cdot)\in C[0,T]$ with $\partial^{\alpha}_{t}{u(x,\cdot)}\in C[0,T]$.

If the function $p$ is unknown, inverse problem is defined as problem of finding a pair of functions $\{p, u\}$ that satisfy \eqref{1.1}-\eqref{1.4}, with the additional constraints that $p\in C[0,T], \,\ g\in C^1[0,T],\,\ u(\cdot,t)\in C^2[0, l]$ and $u(x,\cdot)\in C[0,T]$ with $\partial^{\alpha}_{t}{u(x,\cdot)}\in C[0,T]$.

For the last decades, equation \eqref{1.1} has been widely recognized for its unique ability to model anomalously slow diffusion $-$ often termed subdiffusion. This framework has been applied across diverse fields, including thermal transport in fractal materials \cite{Nigmatullin86}, fluid movement in porous underground formations \cite{Adams92}, and the passage of proteins through cellular membranes \cite{Kubica20}. For an in-depth discussion of the physical underpinnings and a broad survey of applications in both physics and engineering, see the survey article \cite{Metzler20}.

Since fractional derivatives are non-local with singular kernels, developing robust and accurate schemes plays a key role in fractional-time diffusion solvers. Many methods have been analysed: the $L1$ scheme \cite{Langlands05, Sun06, Lin07, Stynes22, Shen24, Zhou24}, the $L2$ scheme \cite{Alikhanov15, Gao14, Lv16, Alikhanov21, Xing18}, and approaches on nonuniform time meshes \cite{Kopteva19, Wang21, Stynes17, Chen19, Liao21}. In particular, Kopteva \cite{Kopteva19}, Wang et al. \cite{Wang21}, and Stynes et al. \cite{Stynes17} studied $L1$ on nonuniform meshes using different analyses, while Chen \& Stynes \cite{Chen19} and Liao et al. \cite{Liao21} extended the $L2-1\sigma$ method from uniform to nonuniform meshes.

The integral overdetermination condition given in \eqref{1.4} defines a key category of inverse problems where observed data consists of spatially averaged quantities rather than localized measurements. Such problems frequently arise in real-world applications $-$ for instance, when sensors measure cumulative values like total contaminant concentration in environmental studies or aggregated thermal energy in heat transfer systems.
The weighting function $\omega(x)$ reflects the spatial response characteristics of the measurement instrument, whereas $g(t)$ supplies the temporal evolution of the observed integral data, which is crucial for determining unknown parameters. This formulation was thoroughly examined by Prilepko and collaborators in their work \cite{Prilepko2000}.
Further studies have expanded on this concept, with significant contributions found in recent works such as \cite{Grim20}, \cite{VanBock22}, \cite{Hendy22}, \cite{Karel22}, \cite{Durd24}, \cite{Durd25}, and \cite{DurdRah25}, demonstrating its continued relevance in contemporary research.

Many authors \cite{Fujishiro16}, \cite{Jin21}, \cite{Ozbilge22}, \cite{Durdiev23}, \cite{Jumaev23}, \cite{Jin24}, \cite{SC24}, \cite{Bekbolat19}, \cite{Bekbolat24} have studied the inverse problem of recovering the time-dependent reaction coefficient in equation \eqref{1.1} under various boundary conditions and additional data.

In more detail, Fujishiro \& Kian (2016) \cite{Fujishiro16} proved uniqueness and Hölder-type stability when the solution is observed at a single interior point (or, equivalently, its Neumann trace on boundary), showing this minimal extra datum suffices to recover $p(t)$. 
Jin \& Zhou (2021) \cite{Jin21} recovered both the fractional order and a spatial potential by prescribing full Cauchy data (Dirichlet + flux) at one boundary endpoint for all $t$, while keeping the initial condition and source unknown. 
Özbilge et al. (2022) \cite{Ozbilge22} dealt with non-local integral boundary conditions and used an overspecified Dirichlet trace on the boundary as the additional measurement driving their finite-difference iteration. 
Durdiev \& Durdiev (2023) \cite{Durdiev23} imposed the usual Dirichlet boundary but appended an extra Neumann condition at $x=0$, converting the inverse task to a Volterra equation that yields global uniqueness. 
Durdiev \& Jumaev (2023) \cite{Jumaev23} extended this to bounded multi-D domains, again relying on an overdetermined Neumann boundary trace to establish existence, uniqueness and stability. 
Jin, Shin \& Zhou (2024) \cite{Jin24} assumed the spatial integral of $u(x,t)$ over the whole domain is known for every $t$, derived conditional Lipschitz stability, and built a fast fixed-point recovery. 
Cen, Shin \& Zhou (2024) \cite{SC24} showed that a single-point boundary flux measurement already guarantees Lipschitz stability and designed a graded-mesh FEM algorithm for practical reconstruction. 

In \cite{RF}, the authors examine the semilinear variant of (\ref{1.1}), addressing the inverse problem of reconstructing the time-dependent reaction coefficient in a Caputo-fractional reaction–diffusion equation ($0<\alpha<1$),subject to nonlocal boundary conditions and integral-redefinition constraints.

Despite their advances, these studies leave notable numerical gaps. Most reconstructions were tested with minimal exploration of mesh adaptivity or high-dimensional efficiency. Error bounds are often asymptotic or conditional, lacking rigorous estimates and noise-propagation analysis. Moreover, convergence proofs rarely cover fully discrete schemes; stability is inferred from continuous theory rather than demonstrated for practical algorithms. Hence, a systematic, global error analysis - coupled with robust, noise-aware discretisations - remains an open task.

The task of recovering the time‐dependent coefficient $p(t)$ in the classical parabolic equation has been addressed by many authors under a variety of overdetermination conditions and boundary setups (e.g.\ \cite{Cannon92, Dehghan01, Ivanchov01, Fatullayev08, Kerimov12, Kanca13, Hazanee13, Hazanee14, Durdiev22}). Likewise, several numerical algorithms have been developed to reconstruct $p(t)$ from integral-type observations (see, for instance, \cite{Cannon92, Kanca13, Hazanee13, Hazanee14, Durdiev22}). 

This paper addresses a significant deficiency in the existing literature by providing robust numerical solutions to problems \eqref{1.1}-\eqref{1.4}. First, we impose a nonhomogeneous integral overdetermination condition as an auxiliary constraint, greatly broadening the framework’s applicability to practical scenarios. Second, we design an unconditionally stable, fully implicit finite‐difference solver and rigorously establish its stability and convergence properties. Third, we develop an efficient, noise-robust computational algorithm for simultaneously identifying a pair of functions $\{p, u\}$.

The remainder of this paper is organised as follows.  In Section \ref{2}, we present key inequalities and foundational lemmas that are repeatedly employed in the analysis throughout the paper. In Section \ref{3}, we derive an a priori estimate that ensures the uniqueness of the direct solution and its continuous dependence on the initial data for problem \eqref{1.1}-\eqref{1.3}.  Section \ref{4} introduces a fully implicit finite-difference scheme and rigorously analyses its stability and convergence. In Section \ref{5}, we develop a computational procedure based on integral formulations to tackle the inverse problem. Finally, Section \ref{6} presents numerical experiments and discusses their results.

\section{Preliminaries}\label{2}
In this section, we recall some fundamental inequalities and lemmas used throughout the paper.

\begin{lemma} \label{lemma1} (Alikhanov \cite{Alikhanov10})
For an arbitrary absolutely continuous function $u(t)$ defined on the interval $[0,T]$, the following inequality holds:
\begin{equation}
u(t) \partial^{\alpha}_{t} u(t) \geq \frac{1}{2} \partial^{\alpha}_{t} u^2(t), \quad 0<\alpha<1.
\end{equation}
\end{lemma}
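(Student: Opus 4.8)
The plan is to reduce the inequality to a statement about the singular kernel in the Caputo derivative. Writing out both sides using the definition,
\[
u(t)\,\partial^{\alpha}_{t} u(t) - \tfrac12 \partial^{\alpha}_{t} u^2(t)
= \frac{1}{\Gamma(1-\alpha)} \int_0^t \frac{u(t)\,u_s(s) - u(s)\,u_s(s)}{(t-s)^\alpha}\,ds,
\]
since $\partial_s u^2(s) = 2u(s)u_s(s)$. Thus it suffices to show that the integral $\int_0^t (t-s)^{-\alpha}\bigl(u(t)-u(s)\bigr)u_s(s)\,ds$ is nonnegative. The natural move is to write $u(t)-u(s) = \int_s^t u_r(r)\,dr$ and exchange the order of integration, or — more transparently — to integrate by parts in $s$.

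First I would introduce $w(s) := u(t) - u(s)$, so that $w_s(s) = -u_s(s)$ and $w(t) = 0$, and consider
\[
I := \int_0^t \frac{w(s)\,(-w_s(s))}{(t-s)^\alpha}\,ds = \frac12 \int_0^t \frac{-\partial_s\bigl(w^2(s)\bigr)}{(t-s)^\alpha}\,ds.
\]
Integrating by parts, the boundary term at $s=t$ vanishes because $w(t)=0$ (and the kernel singularity is integrable since $\alpha<1$, so the product still tends to $0$), while the boundary term at $s=0$ contributes $\tfrac12 t^{-\alpha} w^2(0) \ge 0$. The remaining integral is
\[
-\frac{\alpha}{2}\int_0^t \frac{w^2(s)}{(t-s)^{\alpha+1}}\,ds,
\]
which has the wrong sign. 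So a more careful bookkeeping is needed: one should instead combine this with the representation $w(s) = -\int_s^t u_r(r)\,dr$ and use a symmetry argument, or appeal directly to the known discrete/continuous convexity inequality for the Caputo operator. The cleanest route is to follow Alikhanov's original argument: write $w(s) = \int_s^t u_r(\sigma)\,d\sigma$ is not quite it either; rather, expand $u(t) = u(s) + \int_s^t u_r\,dr$ inside $u(t)u_s(s)$ to get
\[
I = \int_0^t \frac{u_s(s)}{(t-s)^\alpha}\left(\int_s^t u_r(r)\,dr\right) ds,
\]
then swap the order of integration over the triangle $0 \le s \le r \le t$, obtaining
\[
I = \int_0^t u_r(r)\left(\int_0^r \frac{u_s(s)}{(t-s)^\alpha}\,ds\right) dr.
\]

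The key step — and the main obstacle — is recognizing this as a positive quadratic form. After the change of order, $I = \int_0^t\!\!\int_0^t K(r,s)\,u_r(r)\,u_s(s)\,\mathbf{1}_{\{s\le r\}}\,ds\,dr$, and symmetrizing in $r \leftrightarrow s$ gives $I = \tfrac12\int_0^t\!\!\int_0^t \bigl[(t-\min(r,s))^{-\alpha}\bigr]u_r(r)\,u_s(s)\,ds\,dr$. The function $(r,s)\mapsto (t-\min(r,s))^{-\alpha}$ is a positive-definite kernel on $[0,t]$: it is (up to a constant) the covariance of a suitable Gaussian process, or more elementarily one verifies $(t-\min(r,s))^{-\alpha} = \frac{\alpha}{\Gamma(1-\alpha)}\cdot\Gamma(1-\alpha)\int_0^{\min(r,s)}(t-\tau)^{-\alpha-1}d\tau + t^{-\alpha}$, so that
\[
I \;=\; t^{-\alpha}\Bigl(\int_0^t u_s\,ds\Bigr)^2 \Big/ \text{(const)} \;+\; \alpha\int_0^t (t-\tau)^{-\alpha-1}\Bigl(\int_\tau^t u_s(s)\,ds\Bigr)^2 d\tau \;\ge\; 0
\]
after restoring the $1/\Gamma(1-\alpha)$ factor. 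Both terms are manifestly nonnegative, which establishes the claim. I expect the delicate points to be (i) justifying the interchange of integration and the vanishing boundary term given the integrable singularity of the kernel, which requires $u$ absolutely continuous and $\alpha \in (0,1)$, and (ii) carrying out the kernel decomposition cleanly; everything else is routine. For the paper's purposes it may suffice to simply cite \cite{Alikhanov10}, but the argument above is the self-contained version.
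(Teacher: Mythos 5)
The paper does not prove this lemma at all --- it simply cites Alikhanov \cite{Alikhanov10} --- so your self-contained argument goes beyond what the paper offers, and it is essentially a reconstruction of Alikhanov's original proof. Your main line is correct: reducing the claim to the nonnegativity of $I=\int_0^t (t-s)^{-\alpha}\bigl(u(t)-u(s)\bigr)u_s(s)\,ds$, then obtaining via Fubini and symmetrization the identity
\[
\frac{1}{\Gamma(1-\alpha)}\,I=\frac{\bigl(u(t)-u(0)\bigr)^2}{2\,\Gamma(1-\alpha)\,t^{\alpha}}
+\frac{\alpha}{2\,\Gamma(1-\alpha)}\int_0^t\frac{\bigl(u(t)-u(s)\bigr)^2}{(t-s)^{1+\alpha}}\,ds\;\ge\;0 ,
\]
which is exactly Alikhanov's representation. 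Two minor corrections. First, your integration-by-parts ``detour'' fails only because of a sign slip: with $w(s)=u(t)-u(s)$ one has $\tfrac{d}{ds}(t-s)^{-\alpha}=+\alpha(t-s)^{-\alpha-1}$, so the remaining integral comes out as $+\tfrac{\alpha}{2}\int_0^t w^2(s)(t-s)^{-\alpha-1}\,ds$, not with a minus sign; that route therefore works directly (given the vanishing of the boundary term at $s=t$) and yields the same identity, making the Fubini/positive-definite-kernel machinery optional. Second, your final display drops the factor $\tfrac12$ coming from the symmetrization and leaves an unresolved ``(const)''; this does not affect nonnegativity but should be cleaned up. You correctly flag the genuine technical points (absolute convergence of the Caputo integral so that Fubini applies, and the behaviour of $w^2(s)(t-s)^{-\alpha}$ as $s\to t$ in the IBP variant); these are handled in \cite{Alikhanov10}, and for the purposes of this paper the citation suffices.
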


\begin{lemma}\label{lemma2}(Poincaré’s inequality {\cite[Prop.~8.13, p.~274]{Brezis2010}})
\[
\int_0^l u^2(x)\,dx \le \frac{l^2}{\pi^2} \int_0^l u_x^2(x)\,dx, \quad \forall u \in H^1_0(0,l).
\]
\end{lemma}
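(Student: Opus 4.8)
The plan is to prove the inequality through the spectral decomposition of the one--dimensional Dirichlet Laplacian on $(0,l)$, equivalently a Fourier sine expansion. First I would reduce to regular functions: since $C_c^\infty(0,l)$ is dense in $H^1_0(0,l)$ in the $H^1$--norm and both sides of the asserted inequality are continuous with respect to that norm, it suffices to establish the bound for $u$ smooth with $u(0)=u(l)=0$; passing to the limit then recovers the general case.

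For such a $u$, expand it in the orthogonal system $\{\sin(n\pi x/l)\}_{n\ge 1}$ of $L^2(0,l)$, writing $u(x)=\sum_{n\ge1}c_n\sin(n\pi x/l)$ with $c_n=\tfrac{2}{l}\int_0^l u(x)\sin(n\pi x/l)\,dx$. Parseval's identity gives
\[
\int_0^l u^2(x)\,dx=\frac{l}{2}\sum_{n\ge1}c_n^2 .
\]
Differentiating termwise, $u_x=\sum_{n\ge1}c_n\,(n\pi/l)\cos(n\pi x/l)$; since $\{\cos(n\pi x/l)\}_{n\ge1}$ is likewise orthogonal with $\int_0^l\cos^2(n\pi x/l)\,dx=l/2$, Parseval yields
\[
\int_0^l u_x^2(x)\,dx=\frac{l}{2}\sum_{n\ge1}\Big(\frac{n\pi}{l}\Big)^2 c_n^2 .
\]
(Equivalently, integrate by parts to write $\int_0^l u_x^2=-\int_0^l u\,u_{xx}$ and apply Parseval to $u$ and $-u_{xx}$.) Then the estimate is immediate from a termwise comparison: because $(n\pi/l)^2\ge(\pi/l)^2$ for every integer $n\ge1$,
\[
\int_0^l u_x^2(x)\,dx=\frac{l}{2}\sum_{n\ge1}\Big(\frac{n\pi}{l}\Big)^2 c_n^2\ \ge\ \frac{\pi^2}{l^2}\cdot\frac{l}{2}\sum_{n\ge1}c_n^2=\frac{\pi^2}{l^2}\int_0^l u^2(x)\,dx,
\]
which rearranges to the claimed bound. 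Moreover equality forces $c_n=0$ for all $n\ge2$, i.e. $u$ is a scalar multiple of $\sin(\pi x/l)$, so the constant $l^2/\pi^2$ is sharp.

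The only real work lies in the two preliminary reductions: the density argument that permits restricting to smooth $u$, and the justification of the termwise Parseval identities (either via the convergence properties of the sine series for $C^2$ functions, or directly through the abstract orthonormal--basis formalism in $L^2(0,l)$). I expect the termwise differentiation/Parseval step to be the main point requiring care; once it is in place, the inequality follows from the elementary observation $n^2\ge1$. An alternative route avoiding Fourier series is variational: $\pi^2/l^2$ is the least Dirichlet eigenvalue $\lambda_1$ of $-\partial_{xx}$ on $(0,l)$, and the Rayleigh--quotient characterisation $\lambda_1=\inf_{0\ne u\in H^1_0(0,l)}\big(\int_0^l u_x^2\big)\big/\big(\int_0^l u^2\big)$ gives the inequality at once, with existence of a minimiser from the compact embedding $H^1_0(0,l)\hookrightarrow L^2(0,l)$ and the Euler--Lagrange problem $-u''=\lambda u$, $u(0)=u(l)=0$ identifying $\lambda_1=\pi^2/l^2$.
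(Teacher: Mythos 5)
Your argument is correct, but note that the paper does not prove this lemma at all: it is quoted verbatim as a known result with a citation to Brezis, so there is no ``paper proof'' to compare against. Your spectral proof is the standard route to the one--dimensional Poincar\'e (Wirtinger) inequality with the sharp constant $l^2/\pi^2$, and the details check out: the reduction to $C_c^\infty(0,l)$ by density is legitimate because both sides are continuous in the $H^1$--norm, and the two Parseval identities are correct since $\{\sin(n\pi x/l)\}_{n\ge1}$ and $\{\cos(n\pi x/l)\}_{n\ge1}$ are each orthogonal families with squared norm $l/2$. The one step you rightly flag as needing care --- identifying the cosine coefficients of $u_x$ with $c_n\,n\pi/l$ --- is cleanest via a single integration by parts, $\int_0^l u_x\cos(n\pi x/l)\,dx=(n\pi/l)\int_0^l u\sin(n\pi x/l)\,dx$, where the boundary terms vanish because $u(0)=u(l)=0$, together with $\int_0^l u_x\,dx=0$ to kill the constant mode; after that the inequality is just $n^2\ge1$ termwise. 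Your proof actually delivers more than the cited statement (sharpness of the constant and the equality case $u\propto\sin(\pi x/l)$), and the Rayleigh--quotient alternative you mention is an equally valid packaging of the same spectral fact.
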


\begin{lemma}\label{lemma3}(Cauchy–Schwarz inequality  \cite{Evans2010})
Let \(f\) and \(g\) be measurable functions on a domain \(\Omega\) such that \(f,g\in L^2(\Omega)\).  Then
\[
\left|\int_\Omega f(x)\,g(x)\,\mathrm{d}x\right|
\;\le\;
\Bigl(\int_\Omega |f(x)|^2\,\mathrm{d}x\Bigr)^{1/2}
\;\Bigl(\int_\Omega |g(x)|^2\,\mathrm{d}x\Bigr)^{1/2}.
\]
\end{lemma}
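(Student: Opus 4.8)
The result is the classical Cauchy–Schwarz inequality, so a one-line appeal to \cite{Evans2010} already suffices; nonetheless, here is the plan for a self-contained argument. First I would dispose of well-definedness and the degenerate cases. The pointwise Young inequality $|f(x)g(x)| \le \tfrac12\bigl(f(x)^2 + g(x)^2\bigr)$ together with $f,g \in L^2(\Omega)$ shows that $fg \in L^1(\Omega)$, so the left-hand side is finite and the statement is meaningful. If $\int_\Omega g^2\,dx = 0$ then $g = 0$ almost everywhere, hence $fg = 0$ a.e.\ and both sides of the asserted inequality vanish; the case $\int_\Omega f^2\,dx = 0$ is symmetric. Thus we may assume $A := \int_\Omega f^2\,dx > 0$ and $B := \int_\Omega g^2\,dx > 0$.

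For the main case I would use the nonnegative-quadratic trick. For every $\lambda \in \mathbb{R}$ we have $f + \lambda g \in L^2(\Omega)$ and
\[
0 \le \int_\Omega \bigl(f(x) + \lambda g(x)\bigr)^2\,dx = A + 2\lambda \int_\Omega f(x)g(x)\,dx + \lambda^2 B .
\]
The right-hand side is a real quadratic polynomial in $\lambda$ with positive leading coefficient $B$ that remains $\ge 0$ for all $\lambda$; hence its discriminant is nonpositive, i.e.\ $\bigl(\int_\Omega f(x)g(x)\,dx\bigr)^2 \le A B$. Taking square roots yields $\bigl|\int_\Omega f(x)g(x)\,dx\bigr| \le \sqrt{A}\,\sqrt{B}$, which is precisely the claim.

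There is no genuine obstacle here: the statement is elementary and could equally be obtained from Hölder's inequality with conjugate exponents $p=q=2$, or by integrating the scaled Young inequality $|f(x)g(x)| \le \tfrac{\varepsilon}{2}f(x)^2 + \tfrac{1}{2\varepsilon}g(x)^2$ and minimising the resulting bound $\tfrac{\varepsilon}{2}A + \tfrac{1}{2\varepsilon}B$ over $\varepsilon > 0$, the minimiser being $\varepsilon = \sqrt{B/A}$. The only points that require a word of care are the finiteness of $\int_\Omega fg$ (which legitimises writing the left-hand side) and the trivial treatment of the case in which one of the two $L^2$ norms vanishes.
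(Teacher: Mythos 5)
Your argument is correct and complete: the Young-inequality step justifying $fg\in L^1(\Omega)$, the disposal of the degenerate case where one $L^2$ norm vanishes, and the nonnegative-quadratic/discriminant argument together constitute the standard proof of Cauchy--Schwarz. The paper itself gives no proof of this lemma --- it is stated as a preliminary and attributed to \cite{Evans2010} --- so there is nothing to compare against beyond noting that your self-contained derivation is consistent with, and more detailed than, the paper's bare citation.
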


\begin{lemma}\label{lemma4}($\varepsilon$ Cauchy inequality \cite{Evans2010})
For any real numbers \(a,b\) and any \(\varepsilon>0\), one has
\[
a\,b\;\le\;\varepsilon\,a^2 \;+\;\frac{1}{4\varepsilon}\,b^2.
\]
\end{lemma}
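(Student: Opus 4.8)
The plan is to obtain the inequality from the nonnegativity of a suitably rescaled perfect square, which is the most self-contained route. Since $\varepsilon>0$, the number $\sqrt{\varepsilon}$ is a well-defined positive real, so I would introduce the auxiliary quantity
\[
\left(\sqrt{\varepsilon}\,a-\frac{1}{2\sqrt{\varepsilon}}\,b\right)^2,
\]
which is nonnegative for every choice of real numbers $a,b$. Expanding the square produces
\[
\varepsilon\,a^2 - a\,b + \frac{1}{4\varepsilon}\,b^2 \ge 0,
\]
and a trivial rearrangement gives the asserted bound $a\,b\le \varepsilon\,a^2 + \tfrac{1}{4\varepsilon}\,b^2$. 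Equivalently, one may cite this as the weighted form of Young's inequality $a b \le \tfrac{1}{2}(\sqrt{2\varepsilon}\,a)^2 + \tfrac{1}{2}(b/\sqrt{2\varepsilon})^2$, but the completion-of-the-square derivation is shorter and needs no external result.

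The only point meriting a remark is that the estimate is vacuously true when $a\,b\le 0$, since the right-hand side is a sum of two nonnegative terms; the perfect-square argument absorbs this situation automatically, so no case distinction on the sign of $a\,b$ is required. I do not foresee any genuine obstacle here: the hypothesis $\varepsilon>0$ is precisely what makes the scaling factors $\sqrt{\varepsilon}$ and $1/(2\sqrt{\varepsilon})$ legitimate, and the whole argument is a two-line manipulation. In the sequel this lemma will be applied with $\varepsilon$ chosen small (to be absorbed into a coercive term) so the $1/(4\varepsilon)$ factor is treated as a harmless constant; that usage, however, plays no role in the proof of the inequality itself.
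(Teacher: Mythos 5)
Your argument is correct: the square $\left(\sqrt{\varepsilon}\,a-\frac{1}{2\sqrt{\varepsilon}}\,b\right)^2\ge 0$ expands exactly to $\varepsilon a^2 - ab + \frac{1}{4\varepsilon}b^2\ge 0$, which rearranges to the claim, and the hypothesis $\varepsilon>0$ is used only to make $\sqrt{\varepsilon}$ legitimate. The paper itself offers no proof --- it simply cites the inequality from Evans --- so there is nothing to compare against; your completion-of-the-square derivation is the standard one and fills that gap cleanly.
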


\begin{definition}\label{definition1}
Let \( \psi(x,t) \in L^2(0,l) \) for each \( t \in [0,T] \), and let \( \alpha \in (0,1) \). We define the Riemann–Liouville fractional integral of the squared spatial \( L^2 \)-norm of \( \psi \) as
\[
\mathcal{D}_t^{-\alpha} \| \psi(\cdot,t) \|_{L^2(0,l)}^2 := \frac{1}{\Gamma(\alpha)} \int_0^t \frac{\| \psi(\cdot,s) \|_{L^2(0,l)}^2}{(t - s)^{1 - \alpha}} \, ds,
\]
where
\[
\| \psi(\cdot,t) \|_{L^2(0,l)}^2 = \int_0^l |\psi(x,t)|^2\,dx.
\]
\end{definition}

\section{A priori estimate}\label{3}
This section establishes an appropriate a priori estimate to ensure the uniqueness of the direct solution and the continuous dependence on the initial data.
\begin{theorem}\label{Theorem 1} If \(p(t) \geq 0\),\,\,\( p(t) \in C([0,T]), \,\, f(x,t)\in C(\overline{Q}_T)\) everywhere in \(Q_T\) then the solution \(u(x,t)\) of the problem \eqref{1.1}-\eqref{1.3} satisfies the a priori estimate:
 
\begin{equation} \label{theorem1}
\| u(x,t)\|_{L^2}^2 \leq \| \varphi(x) \|_{L^2}^2  + c\mathcal{D}_t^{-\alpha} \| f(x,t)\|_{L^2}^2,
\end{equation}
where $ \| u(x,t) \|_{L^2}^2 = \int_0^l u^2(x,t)dx$, $c=\frac{l^2}{2\pi^2}$.
\end{theorem}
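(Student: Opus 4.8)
The plan is to run a Caputo-adapted energy argument: multiply the PDE by $u$, integrate in $x$, and reduce everything to a scalar fractional differential inequality for $v(t):=\|u(\cdot,t)\|_{L^2}^2$, which is then inverted via the Riemann--Liouville integral $\mathcal{D}_t^{-\alpha}$.

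\textbf{Step 1 (energy identity).} Multiply \eqref{1.1} by $u(x,t)$ and integrate over $x\in(0,l)$. Integrating the diffusion term by parts and using the homogeneous boundary data \eqref{1.3} gives $-\int_0^l u\,u_{xx}\,dx=\int_0^l u_x^2\,dx=\|u_x(\cdot,t)\|_{L^2}^2$, so
\begin{equation*}
\int_0^l u\,\partial_t^\alpha u\,dx+\|u_x(\cdot,t)\|_{L^2}^2+p(t)\|u(\cdot,t)\|_{L^2}^2=\int_0^l f\,u\,dx .
\end{equation*}
Now apply Lemma \ref{lemma1} pointwise in $x$ and integrate, interchanging the spatial integral with the Caputo operator (Fubini, under the assumed regularity), to get $\int_0^l u\,\partial_t^\alpha u\,dx\ge\tfrac12\,\partial_t^\alpha\|u(\cdot,t)\|_{L^2}^2$. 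Since $p(t)\ge 0$ the potential term is nonnegative and is dropped, leaving $\tfrac12\,\partial_t^\alpha\|u(\cdot,t)\|_{L^2}^2+\|u_x(\cdot,t)\|_{L^2}^2\le\int_0^l f\,u\,dx$.

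\textbf{Step 2 (right-hand side, Poincaré, scalar inequality).} Bound $\int_0^l f\,u\,dx\le\|f(\cdot,t)\|_{L^2}\|u(\cdot,t)\|_{L^2}$ by Lemma \ref{lemma3}, then $\le\varepsilon\|u(\cdot,t)\|_{L^2}^2+\tfrac{1}{4\varepsilon}\|f(\cdot,t)\|_{L^2}^2$ by Lemma \ref{lemma4}. Keep $\|u_x\|_{L^2}^2$ on the left, bound it below via Lemma \ref{lemma2} by $\tfrac{\pi^2}{l^2}\|u\|_{L^2}^2$, and choose $\varepsilon=\pi^2/l^2$ so that the $\varepsilon\|u\|_{L^2}^2$ term is exactly absorbed. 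This yields the scalar fractional inequality
\begin{equation*}
\partial_t^\alpha\|u(\cdot,t)\|_{L^2}^2\le c\,\|f(\cdot,t)\|_{L^2}^2,\qquad c=\frac{l^2}{2\pi^2}.
\end{equation*}

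\textbf{Step 3 (inversion).} Apply $\mathcal{D}_t^{-\alpha}$ to both sides. Its kernel $(t-s)^{\alpha-1}/\Gamma(\alpha)$ is nonnegative, so $\mathcal{D}_t^{-\alpha}$ preserves inequalities; and the left-inverse identity $\mathcal{D}_t^{-\alpha}\partial_t^\alpha v(t)=v(t)-v(0)$ for the Caputo derivative, applied with $v(t)=\|u(\cdot,t)\|_{L^2}^2$ and $v(0)=\|\varphi\|_{L^2}^2$, gives precisely \eqref{theorem1}. Uniqueness and continuous dependence then follow by taking $f\equiv 0$, resp.\ by linearity.

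The main technical point to handle carefully is Step 1: interchanging the spatial integral with the nonlocal Caputo operator before applying Lemma \ref{lemma1}, which requires the stated smoothness $u(x,\cdot)\in C[0,T]$, $\partial_t^\alpha u(x,\cdot)\in C[0,T]$ together with Fubini's theorem. The only other facts used beyond the listed lemmas are the positivity (hence monotonicity) of $\mathcal{D}_t^{-\alpha}$ and the Caputo inversion formula; with these in place, the remaining estimates are the routine Cauchy--Schwarz $\to$ $\varepsilon$-Cauchy $\to$ Poincaré chain described above.
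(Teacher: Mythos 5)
Your proposal is correct and follows essentially the same argument as the paper's proof: the energy identity with Alikhanov's inequality (Lemma \ref{lemma1}), integration by parts with the Dirichlet data, the Cauchy--Schwarz/$\varepsilon$-Cauchy/Poincar\'e chain with $\varepsilon=\pi^2/l^2$, and inversion by $\mathcal{D}_t^{-\alpha}$ with the Caputo identity, yielding the same constant $c=l^2/(2\pi^2)$. Your added remarks on Fubini for the Caputo--integral interchange and on the positivity of the Riemann--Liouville kernel make explicit two points the paper leaves implicit, but the route is identical.
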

\begin{proof}\label{proof 1} 
We multiply equation \eqref{1.1} by \( u(x,t) \) and integrate over \( x \in (0,l) \):
\begin{equation} \label{Theo1_eq1}
\int_0^l u(x,t) \partial^{\alpha}_{t}{u}(x,t) \,dx  - \int_0^l u(x,t) u_{xx}(x,t) \,dx + \int_0^l p(t) u^2(x,t) \,dx = \int_0^l f(x,t) u(x,t) \,dx.
\end{equation}
Then we transform the terms of identity \eqref{Theo1_eq1}; by Lemma~\ref{lemma1}, we obtain
\[ 
\int_0^l u(x,t) \partial^{\alpha}_{t} u(x,t) \,dx \geq  \int_0^l \frac{1}{2} \partial^{\alpha}_{t}u^2(x,t) \,dx = \frac{1}{2} \partial^{\alpha}_{t} \| u(x,t) \|_{L^2}^2.
\]
Using integration by parts:
\[
 - \int_0^l u(x,t) u_{xx}(x,t) \,dx = -\left[ u(x,t) u_x(x,t) \right]_0^l + \int_0^l u_x^2(x,t) \,dx.
\]
From the Dirichlet boundary conditions \eqref{1.3},
\[
u(0,t) = 0, \quad u(l,t) = 0,
\]
thus,\[
-\left[ u u_x \right]_0^l = -\left[u(l,t) u_x(l,t) - u(0,t) u_x(0,t) \right] =  0,
\]

\[
- \int_0^l u(x,t) u_{xx}(x,t) \,dx =  \int_0^l u_x^2(x,t) = \| u_x(x,t) \|_{L^2}^2.
\]

To bound the right-hand side, we use the Cauchy-Schwarz inequality (Lemma \ref{lemma3})
\[
\left| \int_0^l f(x,t) u(x,t) \,dx \right| \leq  \| u(x,t) \|_{L^2} \| f(x,t) \|_{L^2}.
\]
Using $\varepsilon$ Cauchy inequality (Lemma \ref{lemma4})
\[
\left| \int_0^l f(x,t) u(x,t) \,dx \right| \leq  \frac{1}{4\varepsilon}\| f(x,t) \|_{L^2}^2 + \varepsilon \| u(x,t) \|_{L^2}^2.
\]

Substituting all the terms into the identity \eqref{Theo1_eq1}, we obtain:
\begin{equation} \label{Theo1_eq2}
\frac{1}{2}\partial^{\alpha}_{t} \| u(x,t) \|_{L^2}^2 + \| u_x(x,t) \|_{L^2}^2 \leq  \frac{1}{4\varepsilon}\| f(x,t) \|_{L^2}^2 + \varepsilon\| u(x,t)\|_{L^2} ^2.
\end{equation}

Using the Poincaré's inequality (Lemma \ref{lemma2}) and set $\varepsilon = \frac{\pi^2}{l^2}$, we obtain:
\begin{equation} \label{Theo1_eq2}
\partial^{\alpha}_{t} \| u(x,t) \|_{L^2}^2  \leq  \frac{l^2}{2\pi^2}\| f(x,t) \|_{L^2}^2.
\end{equation}

Applying the fractional integral operator \( \mathcal{D}_t^{-\alpha} \) (Definition \ref{definition1}) both sides of inequality \eqref{Theo1_eq2}, we obtain
\begin{equation}\label{Theo1_eq2_frac_integrated}
\mathcal{D}_t^{-\alpha} ( \frac{1}{2} \partial^{\alpha}_{t} \| u(x,t) \|_{L^2}^2 )
\leq \mathcal{D}_t^{-\alpha}  (\frac{l^2}{2\pi^2}\| f(x,t) \|_{L^2}^2).
\end{equation}

Using the property
\[
\mathcal{D}_t^{-\alpha}  \partial^{\alpha}_{t} \| u(x,t)\|_{L^2}^2 = \| u(x,t)\|_{L^2}^2 - \| u(x,0)\|_{L^2}^2,
\]
inequality \eqref{Theo1_eq2_frac_integrated} simplifies to
\begin{equation}\label{Theo1_eq2_final}
\| u(x,t)\|_{L^2}^2 \leq \| \varphi(x) \|_{L^2}^2  + \frac{l^2}{2\pi^2}\mathcal{D}_t^{-\alpha}  \| f(x,t)\|_{L^2}^2.
\end{equation}
\end{proof}
The a priori estimate \eqref{Theo1_eq2_final} ensures both the uniqueness and continuous dependence
of the direct solution to problems \eqref{1.1}-\eqref{1.3} on the initial data.

\subsection{Existence and Uniqueness of the Inverse Problem}

In this subsection, we present the theoretical results on the existence and uniqueness of solutions to the inverse problem \eqref{1.1}–\eqref{1.4}, as established in \cite{DurdRah25}. These results hold under the following set of assumptions:

\begin{itemize}
\item[(A1)] \( \varphi \in H^2(0, l) \cap H_0^1(0, l), \quad f \in C([0, T]) \);
\item[(A2)] \( g(0) + \beta g(T) = (\omega, \varphi), \quad \beta \geq 0 \);
\item[(A3)] \( \partial_t^\alpha g \in C[0, T] \) and satisfies the positivity condition:
\[
|g(t)| \geq \frac{1}{g_0} > 0, \quad \text{for all } t \in [0, T],
\]
where \( g_0 \) is a positive constant;
\item[(A4)] \( \omega(x) \in L^2(0, l) \).
\end{itemize}

\begin{theorem}[Existence, \cite{DurdRah25}]
Assume conditions $(A1)-(A4)$ are satisfied. Then, for any \( T > 0 \), there exists at least one solution \((u, p) \in C([0, T]; H^2(0, l)) \times C([0, T])\) to the inverse problem \eqref{1.1}-\eqref{1.4}.
\end{theorem}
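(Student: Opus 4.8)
The plan is to reduce problem \eqref{1.1}--\eqref{1.4} to a fixed-point problem for the single scalar unknown $p$ and then to close it by a compactness (Schauder) argument. First I would use the overdetermination condition \eqref{1.4}: applying $\partial_t^\alpha$ to it and inserting $\partial_t^\alpha u$ from \eqref{1.1} yields
\begin{equation*}
\partial_t^\alpha g(t)=\int_0^l\partial_t^\alpha u(x,t)\,\omega(x)\,dx=\int_0^l\bigl(u_{xx}(x,t)-p(t)u(x,t)+f(x,t)\bigr)\omega(x)\,dx,
\end{equation*}
and, since $\int_0^l u(x,t)\omega(x)\,dx=g(t)$ while $|g(t)|\ge 1/g_0>0$ by (A3), this is equivalent to the explicit representation
\begin{equation}\label{p-star}
p(t)=\frac{1}{g(t)}\Bigl((u_{xx}(\cdot,t),\omega)+(f(\cdot,t),\omega)-\partial_t^\alpha g(t)\Bigr),
\end{equation}
with $(\cdot,\cdot)$ the $L^2(0,l)$ inner product. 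Conversely, if $p$ satisfies \eqref{p-star} with $u=u[p]$ the solution of the direct problem \eqref{1.1}--\eqref{1.3}, then $h(t):=(u[p](\cdot,t),\omega)-g(t)$ obeys $\partial_t^\alpha h=-p(t)h$ with vanishing initial value (ensured by the compatibility condition (A2)); multiplying by $h$, invoking Lemma~\ref{lemma1}, and applying a fractional Grönwall inequality forces $h\equiv 0$, so that \eqref{1.4} holds. Hence the inverse problem is equivalent to finding a fixed point of the operator $\mathcal{A}$ that sends $p$ to the right-hand side of \eqref{p-star} with $u=u[p]$.

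Next I would assemble the direct-problem machinery feeding $\mathcal{A}$. For each fixed $p\in C([0,T])$ the problem \eqref{1.1}--\eqref{1.3} is linear and uniquely solvable; the a priori estimate of Theorem~\ref{Theorem 1} (extended by a fractional Grönwall argument when $p$ changes sign) controls $\|u[p](\cdot,t)\|_{L^2}$ in terms of the data, while — under (A1), i.e.\ $\varphi\in H^2(0,l)\cap H^1_0(0,l)$ — the eigenfunction (Mittag--Leffler/Duhamel) representation of $u[p]$ upgrades this to a bound for $\|u[p](\cdot,t)\|_{H^2(0,l)}$ in terms of the data and of $\|p\|_{C([0,T])}$. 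Since $\omega\in L^2(0,l)$ by (A4), the term $(u_{xx}(\cdot,t),\omega)$ in \eqref{p-star} is then well defined and bounded. Taking $R>0$ large enough, these bounds together with $|g|\ge 1/g_0$ show that $\mathcal{A}$ maps the closed, bounded, convex set $K_R:=\{p\in C([0,T]):\ \|p\|_{C([0,T])}\le R\}$ into itself (one may work within $K_R\cap\{p\ge 0\}$ if one prefers to invoke Theorem~\ref{Theorem 1} directly).

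It then remains to verify the continuity and compactness of $\mathcal{A}$ on $K_R$. Continuity follows from a stability estimate for the direct problem: if $p_n\to p$ in $C([0,T])$, subtracting the equations for $u[p_n]$ and $u[p]$ and applying Theorem~\ref{Theorem 1} to the difference — with Lemmas~\ref{lemma3}--\ref{lemma4} and a fractional Grönwall inequality used to absorb the $pu$ term — gives $u[p_n]\to u[p]$ in $C([0,T];H^2)$, which \eqref{p-star} transfers to $\mathcal{A}p_n\to\mathcal{A}p$. Precompactness of $\mathcal{A}(K_R)$ in $C([0,T])$ reduces, via the Arzelà--Ascoli theorem, to equicontinuity in $t$, i.e.\ to a uniform-in-$p$ modulus of continuity for $t\mapsto(u_{xx}[p](\cdot,t),\omega)$. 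I expect this to be the main obstacle: it demands quantitative temporal regularity of $u[p]$ as a map into $H^2(0,l)$ — Hölder continuity with exponent governed by $\alpha$ — which is exactly where the smoothing properties of the Caputo fractional diffusion and the hypotheses $\varphi\in H^2\cap H^1_0$, $\partial_t^\alpha g\in C[0,T]$ become essential. Once equicontinuity is in hand, Schauder's fixed-point theorem yields $p_*\in K_R$ with $\mathcal{A}p_*=p_*$, and $(u[p_*],p_*)\in C([0,T];H^2(0,l))\times C([0,T])$ is the desired solution, the regularity being inherited from the direct-problem theory. (Alternatively, one may recast $(u,p)$ as the solution of a coupled system of Volterra equations with weakly singular kernels and obtain existence on every interval $[0,T]$ by successive approximations, the Mittag--Leffler growth bound ensuring convergence of the iterates.)
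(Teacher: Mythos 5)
First, a point of comparison that matters: the paper does not prove this statement at all. The Existence theorem is imported verbatim from \cite{DurdRah25} and stated without proof, so there is no in-paper argument to measure yours against. What can be said is that your reduction of the inverse problem to a scalar fixed-point equation for $p$ is the same mechanism the paper exploits computationally in Section~\ref{5} (formula \eqref{Pformula} is your representation for $p$ after two integrations by parts), and it is the standard route in this literature, so the overall architecture of your proposal is reasonable.

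That said, two steps in your outline are genuine gaps rather than routine verifications. (i) The self-map of $K_R$ for \emph{arbitrary} $T>0$: your $H^2$-bound on $u[p]$ necessarily depends on $\|p\|_{C([0,T])}\le R$ through a fractional Gr\"onwall factor of Mittag--Leffler type, which grows superlinearly in $R$, so the required inequality $\|\mathcal{A}p\|_{C([0,T])}\le R$ can fail for every $R$ once $T$ is large. As written, the argument delivers only local-in-time existence, whereas the theorem (and the title of \cite{DurdRah25}, which speaks of \emph{global} solvability) claims existence for any $T$; closing this gap needs an $R$-independent a priori bound on $p$ or a continuation argument, which is delicate for Caputo equations because of the memory term. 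Your fallback of working in $K_R\cap\{p\ge 0\}$ would give $p$-independent energy bounds via Theorem~\ref{Theorem 1}, but nothing in (A1)--(A4) guarantees that $\mathcal{A}$ preserves nonnegativity, so that set is not obviously invariant either. (ii) The compactness of $\mathcal{A}(K_R)$ in $C([0,T])$, which you correctly flag as the crux but leave unproven: a uniform-in-$p$ modulus of continuity for $t\mapsto u[p](\cdot,t)$ with values in $H^2(0,l)$ is precisely the kind of statement that degenerates near $t=0$ for fractional diffusion (the paper itself emphasizes $u_t\sim t^{\alpha-1}$), and it does not follow from anything established here. A smaller but concrete error: condition (A2) reads $g(0)+\beta g(T)=(\omega,\varphi)$ with $\beta\ge 0$, so your function $h(t)=(u[p](\cdot,t),\omega)-g(t)$ satisfies $h(0)=\beta g(T)$, which vanishes only when $\beta=0$; (A2) is inherited from the non-local initial condition of \cite{DurdRah25} and is compatible with the pure initial condition \eqref{1.2} only in that case, so your equivalence step needs $\beta=0$ stated explicitly (or the nonzero initial value of $h$ tracked through the Gr\"onwall argument).
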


\begin{theorem}[Uniqueness, \cite{DurdRah25}]
Let \( T > 0 \) and suppose assumptions $(A1)-(A4)$ hold. If the inverse problem \eqref{1.1}-\eqref{1.4} admits two solutions \((u_i, p_i) \in C([0, T]; H^2(0, l)) \times C([0, T])\) for \( i = 1, 2 \), then these solutions coincide, i.e.,
\[
(u_1(t), p_1(t)) = (u_2(t), p_2(t)) \quad \text{for all } t \in [0, T].
\]
\end{theorem}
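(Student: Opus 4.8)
The plan is to argue by subtraction. Suppose $(u_1,p_1)$ and $(u_2,p_2)$ both solve \eqref{1.1}--\eqref{1.4} in the stated class, and set $v:=u_1-u_2$ and $q:=p_1-p_2$. Subtracting the two copies of \eqref{1.1} and rewriting $p_1u_1-p_2u_2=p_1v+q\,u_2$, one sees that $v$ solves the linear problem
\[
\partial_t^{\alpha}v-v_{xx}+p_1(t)\,v=-\,q(t)\,u_2(x,t),\qquad v(x,0)=0,\quad v(0,t)=v(l,t)=0,
\]
together with the homogeneous integral constraint $\int_0^l v(x,t)\,\omega(x)\,dx\equiv 0$ inherited from \eqref{1.4}. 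So the goal is to show that the only solution of this coupled system $(v,q)$ with vanishing data at $t=0$ is the trivial one.

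The next step is to extract a closed formula for $q$. Applying $\partial_t^{\alpha}$ to the identity $\int_0^l v\,\omega\,dx\equiv 0$ (whose Caputo derivative vanishes) and interchanging $\partial_t^{\alpha}$ with the spatial integral, then substituting $\partial_t^{\alpha}v=v_{xx}-p_1v-q\,u_2$ from the equation, I get
\[
\int_0^l v_{xx}(x,t)\,\omega(x)\,dx-p_1(t)\int_0^l v\,\omega\,dx-q(t)\int_0^l u_2(x,t)\,\omega(x)\,dx=0 .
\]
Since $\int_0^l v\,\omega\,dx=0$ and $\int_0^l u_2\,\omega\,dx=g(t)$ with $|g(t)|\ge 1/g_0>0$ by (A3), this gives $q(t)=g(t)^{-1}\int_0^l v_{xx}(\cdot,t)\,\omega\,dx$; in particular $|q(t)|\le g_0\,\|\omega\|_{L^2}\,\|v_{xx}(\cdot,t)\|_{L^2}$.

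With these two relations the system is self-referential, and I would close it by an energy/fractional-Gronwall argument in the spirit of the proof of Theorem~\ref{Theorem 1}. Multiply the $v$-equation by $v$, integrate over $(0,l)$, use Lemma~\ref{lemma1} to produce $\tfrac12\partial_t^{\alpha}\|v\|_{L^2}^2$, integrate by parts to obtain $\|v_x\|_{L^2}^2$ (boundary terms vanish), absorb the $p_1\|v\|_{L^2}^2$ term using $p_1\in C[0,T]$, and estimate the right-hand side $-q\int_0^l u_2 v\,dx$ by Cauchy--Schwarz (Lemma~\ref{lemma3}), the bound on $|q(t)|$ above, and $M:=\sup_{t\in[0,T]}\|u_2(\cdot,t)\|_{L^2}<\infty$ (finite since $u_2\in C([0,T];H^2)$). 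Applying $\mathcal{D}_t^{-\alpha}$ and using the generalized (fractional) Gronwall inequality would then force $\|v(\cdot,t)\|_{L^2}\equiv 0$, whence $u_1\equiv u_2$ and, via the $q$-formula, $p_1\equiv p_2$.

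The delicate point, and the main obstacle, is closing the estimate for $q$: the formula for $q(t)$ involves $\int_0^l v_{xx}\,\omega\,dx$, a second-order quantity, while the basic energy inequality controls only $\|v\|_{L^2}$ and $\|v_x\|_{L^2}$, and since (A4) only gives $\omega\in L^2(0,l)$ one cannot integrate by parts to move the derivatives onto $\omega$. One therefore needs either (i) a higher-order energy estimate for $v$ — e.g. testing with $-v_{xx}$, or differentiating the equation — to bring $\|v_{xx}(\cdot,t)\|_{L^2}$ under control by a time-fractional convolution of lower-order norms of $v$, or (ii) a smoothing/regularity bound for the solution operator of $\partial_t^{\alpha}-\partial_{xx}$, expressed through Mittag--Leffler operators, that yields $\|v_{xx}(\cdot,t)\|_{L^2}\lesssim\int_0^t(t-s)^{\gamma-1}\|q(s)\|\,ds$ for suitable $\gamma$; either route turns the pair $(v,q)$ into a genuine Volterra system to which a fractional Gronwall argument applies, after which the remaining steps are routine. (Alternatively, one may simply invoke the argument of \cite{DurdRah25}, on which this statement is based.)
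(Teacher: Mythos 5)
First, there is nothing in the paper to compare against: this uniqueness theorem (like the existence theorem before it) is quoted without proof from \cite{DurdRah25}, so your argument has to stand on its own. Your overall strategy --- subtract the two solutions, use the homogeneous integral constraint $\int_0^l v\,\omega\,dx\equiv 0$ to solve for $q=p_1-p_2$ in terms of $v=u_1-u_2$, and close the resulting Volterra-type system with a fractional Gronwall inequality --- is the natural one and matches the general shape of the argument in the cited reference. The algebra of the subtraction, the derivation of the $q$-formula, and the use of (A3) to divide by $g(t)$ are all correct.

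However, as you yourself flag, the proof has a genuine gap at its central step, and it is not a formality. The identity $q(t)=g(t)^{-1}\int_0^l v_{xx}(x,t)\,\omega(x)\,dx$ requires pointwise-in-time control of $\|v_{xx}(\cdot,t)\|_{L^2}$ (or at least of its pairing with $\omega$), whereas the energy estimate modelled on Theorem~\ref{Theorem 1} controls only $\|v\|_{L^2}$ and $\|v_x\|_{L^2}$; since (A4) gives only $\omega\in L^2(0,l)$, you cannot integrate by parts to shift derivatives onto $\omega$. Without a quantitative smoothing bound of the form $\|v_{xx}(\cdot,t)\|_{L^2}\le C\int_0^t (t-s)^{\gamma-1}|q(s)|\,ds$ for some $\gamma\in(0,1]$ --- obtained, e.g., from the Mittag--Leffler representation of the solution operator for $\partial_t^\alpha-\partial_{xx}+p_1(t)$ and its $L^2\to H^2$ smoothing --- the pair $(v,q)$ is not a closed Volterra system and the Gronwall step cannot be executed. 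Your proposed routes (i) and (ii) are the right ideas, but neither is carried out, and route (i) (testing with $-v_{xx}$) yields $\|v_{xx}\|_{L^2}$ only in a time-averaged fractional sense, which still needs a further argument to convert into the pointwise bound your $q$-formula demands. The proposal is therefore an accurate roadmap with the hardest step missing; to complete it you must either prove the $H^2$-smoothing estimate or genuinely defer to \cite{DurdRah25}, as you note in your closing parenthesis.
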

In the remainder of this work, we focus on the numerical solution of the inverse problem \eqref{1.1}--\eqref{1.4}. Building upon the established existence and uniqueness results, we develop and implement stable numerical schemes to approximate both the state variable \( u(x,t) \) and the unknown time-dependent coefficient \( p(t) \). Our goal is to construct stable and accurate methods that efficiently identify the inverse data from available measurements.

\section{Numerical Methods for the direct problem}\label{4}
In this section, we present a numerical solution of the time-dependent fractional diffusion
equation \eqref{1.1} with initial \eqref{1.2} and boundary \eqref{1.3} using the finite difference method.
We divide the spatial domain $[0,l]$ into $N+1$ grid points with spacing $h=l/N$, and the time domain $[0, T]$ into $M+1$ grid points using a \emph{graded} mesh
\[
t_k \,=\, T\Big(\frac{k}{M}\Big)^{r}, \qquad k=0,1,\dots,M,\qquad r\ge 1,
\]
with variable steps $\tau_k:=t_k-t_{k-1}$. Let the grid points be denoted as $x_i=ih$, $i=0,1,\dots, N$, and let $u_i^k$ be the numerical
approximation to $u(x_i, t_k)$. The Caputo fractional derivative is approximated by the (nonuniform) L1 formula:
\begin{equation}\label{3.1}
\frac{\partial^\alpha u(x_i,t_k)}{\partial t^\alpha}
\;\approx\;
\frac{1}{\Gamma(2-\alpha)}\sum_{j=1}^{k} d_{k,j}\,\big(u_i^{j}-u_i^{j-1}\big),
\end{equation}
where, for $1\le j\le k$,
\[
d_{k,j}
\;:=\;
\frac{(t_k-t_{j-1})^{\,1-\alpha}-(t_k-t_j)^{\,1-\alpha}}{\tau_j}.
\]
(Here, $0<\alpha<1$.  For fixed $k$, the weights satisfy $d_{k,1} \le d_{k,2}\le...\le d_{k,k}$ (in particular $d_{k,j+1} - d_{k,j} \ge 0$ and $d_{k,k} = \tau_k^{-\alpha}$). Using formula \eqref{3.1} together with a central difference scheme of order $O(h^2)$ to construct an implicit difference scheme for \eqref{1.1}, we obtain the following.

Then problem \eqref{1.1} can be rewritten in the following form for $i=1,\dots,N-1$ and $k=1,\dots,M$:
\begin{equation}\label{3.2}
\frac{1}{\Gamma(2-\alpha)} \sum_{j=1}^{k} d_{k,j}\,\big(u_i^{j}-u_i^{j-1}\big)
\;-\; \frac{u_{i+1}^{k} - 2u_i^{k} + u_{i-1}^{k}}{h^2}
\;+\; p^k u_i^k \;=\; f_i^k.
\end{equation}

Then, let us rearrange:
\[
\frac{1}{\Gamma(2-\alpha)}\!\left[\, d_{k,k}u_i^{k}
+ \sum_{j=1}^{k-1} d_{k,j}\,u_i^{j}
- \Big( d_{k,1}u_i^{0} + \sum_{j=1}^{k-1} d_{k,j+1}\,u_i^{j}\Big)\right]
-
\frac{u_{i+1}^{k} - 2u_i^{k} + u_{i-1}^{k}}{h^2}
+ p^k u_i^k = f_i^k,
\]
\[
\frac{1}{\Gamma(2-\alpha)}\!\left[\, d_{k,k}u_i^{k} - d_{k,1}u_i^{0}
- \sum_{j=1}^{k-1}\big(d_{k,j+1}-d_{k,j}\big)\,u_i^{j}\right]
-
\frac{u_{i+1}^{k} - 2u_i^{k} + u_{i-1}^{k}}{h^2}
+ p^k u_i^k = f_i^k,
\]
and, writing the history terms with $u_i^{k-j}$ as in the uniform case,
\[
\frac{1}{\Gamma(2-\alpha)}\!\left[\, d_{k,k}u_i^{k} - d_{k,1}u_i^{0}
- \sum_{j=1}^{k-1}\big(d_{k,k-j+1}-d_{k,k-j}\big)\,u_i^{\,k-j}\right]
-
\frac{u_{i+1}^{k} - 2u_i^{k} + u_{i-1}^{k}}{h^2}
+ p^k u_i^k = f_i^k.
\]
Thus, we obtain the tridiagonal form
\begin{equation}\label{3.3}
-\frac{1}{h^2} u_{i+1}^{k}
+ \Big(\frac{d_{k,k}}{\Gamma(2-\alpha)} + \frac{2}{h^2} + p^k \Big) u_{i}^{k}
-\frac{1}{h^2} u_{i-1}^{k}
=
\frac{d_{k,1}}{\Gamma(2-\alpha)}\,u_i^0
+ \frac{1}{\Gamma(2-\alpha)} \sum_{j=1}^{k-1}\!\big(d_{k,k-j}-d_{k,k-j+1}\big)\,u_i^{\,k-j}
+ f_i^k .
\end{equation}

The discretisation of the conditions \eqref{1.2} and \eqref{1.3} give
\[
u_i^0 = \varphi(x_i), \quad i = 0,1,\dots,N,
\]
\begin{equation}\label{3.4}
u_0^k = 0, \quad u_N^k = 0, \quad \text{for all } k = 0,1,\dots,M.
\end{equation}
Combining all the difference equations for points $x_i$ (for $i=1,\dots,N-1$), we can form the system of $(N-1)$ linear algebraic equations in matrix form

\[
u^k = \begin{pmatrix} u_1^k \\ u_2^k \\ \vdots \\ u_{N-1}^k \end{pmatrix},
\qquad
A^k =
\begin{pmatrix}
c_1^k & -\tfrac1{h^2} &        &        &   \\
-\tfrac1{h^2} & c_2^k & -\tfrac1{h^2} &        &   \\
      & \ddots & \ddots & \ddots &   \\
      &        & -\tfrac1{h^2} & c_{N-2}^k & -\tfrac1{h^2} \\
      &        &        & -\tfrac1{h^2} & c_{N-1}^k
\end{pmatrix},
\quad
b^k =
\begin{pmatrix}
b_1^k \\ b_2^k \\ \vdots \\ b_{N-2}^k \\ b_{N-1}^k
\end{pmatrix},
\]

where
\begin{equation}\label{3.5}
c_i^k = \frac{d_{k,k}}{\Gamma(2-\alpha)} + \frac{2}{h^2} + p^k,
\qquad
b_i^k = \frac{d_{k,1}}{\Gamma(2-\alpha)}\,u_i^0
+ \frac{1}{\Gamma(2-\alpha)}
\sum_{j=1}^{k-1}\!\big(d_{k,k-j}-d_{k,k-j+1}\big)\,u_i^{\,k-j}
+ f_i^k,
\end{equation}
so that the scheme is compactly
\begin{equation}\label{tri}
A^k\,u^k \;=\; b^k.
\end{equation}
Here, the scheme \eqref{tri} is a tridiagonal system of equations, so we can solve it using the Thomas method.

\subsection{Stability and convergence of the implicit difference schemes}

To justify the proposed algorithm, we will derive estimates of the stability of the scheme \eqref{3.3}
concerning the initial data and the right-hand side.

\begin{theorem}[Stability]\label{thm:stability}
\textit{Let $0<\alpha<1$ and let $\{u_i^k\}_{0\le i\le N,\;0\le k\le M}$ be the solution of the finite
difference scheme \eqref{3.3} on the graded time grid $t_k=T(k/M)^r$ with $r\ge1$, with
Dirichlet conditions $u_0^k=u_N^k=0$ and initial data $u_i^0=\varphi(x_i)$. Assume the coefficient
$p^{k}\ge 0$ for all $k$.}
\textit{Then the scheme is unconditionally stable in the maximum norm ($L^\infty$): for any spatial
mesh size $h>0$ and graded time grid, the solution satisfies}
\begin{equation}\label{eq:Linfty-bound-graded}
\|u^k\|_{\infty}\;\le\;
\|\varphi\|_{\infty}
+ C_\alpha\,\max_{1\le m\le k}\|f^{\,m}\|_{\infty},
\qquad 1\le k\le M,
\end{equation}
\textit{where $\|u^k\|_\infty = \max_{0 \leq i \leq N} |u_i^k|$ and one can choose}
\[
C_\alpha \;=\; \frac{\Gamma(2-\alpha)}{\underline{d}_1}
\quad\text{with}\quad
\underline{d}_1:=\min_{1\le m\le M} d_{m,1}
\;\;\;\text{and}\;\;\;
d_{k,1}:=\frac{(t_k-t_0)^{1-\alpha}-(t_k-t_1)^{1-\alpha}}{\tau_1}.
\]
\textit{In particular, by the mean value theorem $d_{k,1}=(1-\alpha)\xi^{-\,\alpha}$ for some
$\xi\in(t_k-\tau_1,t_k)\subset(0,T]$, hence $\underline{d}_1\ge (1-\alpha)T^{-\alpha}$ and}
\begin{equation}\label{eq:Linfty-bound-graded-simple}
\|u^k\|_{\infty}\;\le\;
\|\varphi\|_{\infty}
+ T^{\alpha}\Gamma(1-\alpha)\,
\max_{1\le m\le k}\|f^{\,m}\|_{\infty},
\qquad 1\le k\le M,
\end{equation}
\textit{with a constant independent of $h$, $M$, and the grading parameter $r\ge1$.}
\end{theorem}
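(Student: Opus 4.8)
The plan is to run a discrete maximum principle in the spatial variable at each fixed time level, then close an induction on the level index $k$ that exploits the structure of the nonuniform $L1$ weights $d_{k,j}$ recorded after \eqref{3.1}.

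First I would reduce to a scalar inequality. Put $V_k:=\|u^k\|_\infty$. Since $u_0^k=u_N^k=0$, the quantity $\max_i|u_i^k|$ is attained at some interior index $i^\ast\in\{1,\dots,N-1\}$ (the case $V_k=0$ being trivial), and replacing $(u,\varphi,f)$ by $(-u,-\varphi,-f)$ — which leaves the scheme \eqref{3.3} and all hypotheses, in particular $p^k\ge0$, unchanged — I may assume $u_{i^\ast}^k=V_k>0$. Evaluating \eqref{3.3} at $i=i^\ast$, the spatial term $\tfrac{2}{h^2}u_{i^\ast}^k-\tfrac1{h^2}\big(u_{i^\ast+1}^k+u_{i^\ast-1}^k\big)=\tfrac1{h^2}\big[(u_{i^\ast}^k-u_{i^\ast+1}^k)+(u_{i^\ast}^k-u_{i^\ast-1}^k)\big]$ is nonnegative because $u_{i^\ast}^k$ is the spatial maximum, and $p^ku_{i^\ast}^k\ge0$; dropping both and bounding the right-hand side of \eqref{3.3} termwise gives
\begin{equation*}
\frac{d_{k,k}}{\Gamma(2-\alpha)}\,V_k
\;\le\;
\|f^k\|_\infty
+\frac{d_{k,1}}{\Gamma(2-\alpha)}\,\|\varphi\|_\infty
+\frac{1}{\Gamma(2-\alpha)}\sum_{j=1}^{k-1}\big(d_{k,j+1}-d_{k,j}\big)\,V_j .
\end{equation*}
Here I use the standard rearrangement of the $L1$ stencil in which the coefficient of $u_i^j$ in the history sum is $(d_{k,j+1}-d_{k,j})/\Gamma(2-\alpha)\ge0$ (the sign and index bookkeeping behind \eqref{3.5} is the only genuinely error-prone point), together with the monotonicity $d_{k,j+1}-d_{k,j}\ge0$ and the telescoping identity $d_{k,1}+\sum_{j=1}^{k-1}(d_{k,j+1}-d_{k,j})=d_{k,k}$.

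Next I would close an induction on $k$. Set $F_k:=\max_{1\le m\le k}\|f^m\|_\infty$; the base case $k=1$ is immediate, the history sum being empty. Assuming $V_j\le\|\varphi\|_\infty+C_\alpha F_j$ for all $j<k$ with $C_\alpha=\Gamma(2-\alpha)/\underline d_1$, substitute into the displayed inequality, bound $F_j\le F_k$, and use the telescoping identity to collapse the $\|\varphi\|_\infty$-contribution exactly to $\tfrac{d_{k,k}}{\Gamma(2-\alpha)}\|\varphi\|_\infty$, obtaining
\begin{equation*}
V_k\;\le\;\|\varphi\|_\infty
+\frac{\Gamma(2-\alpha)}{d_{k,k}}\,\|f^k\|_\infty
+C_\alpha\,\frac{d_{k,k}-d_{k,1}}{d_{k,k}}\,F_k .
\end{equation*}
It then remains to verify $\tfrac{\Gamma(2-\alpha)}{d_{k,k}}\|f^k\|_\infty+C_\alpha\tfrac{d_{k,k}-d_{k,1}}{d_{k,k}}F_k\le C_\alpha F_k$; since $\|f^k\|_\infty\le F_k$ this reduces to $\tfrac{\Gamma(2-\alpha)}{d_{k,k}}\le C_\alpha\tfrac{d_{k,1}}{d_{k,k}}$, i.e.\ to $d_{k,1}\ge\underline d_1$, which holds by the very definition of $\underline d_1=\min_{1\le m\le M}d_{m,1}$. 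This yields $V_k\le\|\varphi\|_\infty+C_\alpha F_k$, i.e.\ \eqref{eq:Linfty-bound-graded}.

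Finally, the simplified bound \eqref{eq:Linfty-bound-graded-simple} follows from an elementary estimate of $\underline d_1$: by the mean value theorem $d_{k,1}=(1-\alpha)\xi^{-\alpha}$ for some $\xi\in(t_k-\tau_1,t_k)\subset(0,T]$, so $d_{k,1}\ge(1-\alpha)T^{-\alpha}$ uniformly in $k$, whence $\underline d_1\ge(1-\alpha)T^{-\alpha}$; combined with $\Gamma(2-\alpha)=(1-\alpha)\Gamma(1-\alpha)$ this gives $C_\alpha\le T^{\alpha}\Gamma(1-\alpha)$, and since no spatial or temporal mesh quantity enters any of these bounds the constant is independent of $h$, $M$, and $r\ge1$. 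I expect the main obstacle to be the accounting in the induction step: one must check that the history terms telescope \emph{exactly} against $d_{k,k}$ so that the final estimate carries $\max_m\|f^m\|_\infty$ rather than a growing sum $\sum_m\|f^m\|_\infty$, and the choice $C_\alpha=\Gamma(2-\alpha)/\underline d_1$ (with the uniform infimum taken over all levels $m$) is precisely what is needed to absorb the residual $f$-term; the discrete maximum-principle step and the weight monotonicity are then routine given the facts already recorded in the excerpt.
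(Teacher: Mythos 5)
Your proposal is correct and follows essentially the same route as the paper: the same induction on the time level driven by the telescoping identity $d_{k,1}+\sum_{j=1}^{k-1}(d_{k,j+1}-d_{k,j})=d_{k,k}$, the same choice $C_\alpha=\Gamma(2-\alpha)/\underline{d}_1$ to absorb the residual $f$-term, and the same mean-value-theorem reduction to the constant $T^{\alpha}\Gamma(1-\alpha)$. The only difference is that you implement the spatial step as a pointwise discrete maximum principle at the maximizing node, whereas the paper passes through the matrix form $A^k u^k=b^k$ and the bound $\|(A^k)^{-1}\|_\infty\le\Gamma(2-\alpha)/d_{k,k}$ for the strictly diagonally dominant $M$-matrix (citing Varga); the two are equivalent here, and your sign convention for the history weights, $(d_{k,j+1}-d_{k,j})\ge 0$, is the correct one that the paper's own proof uses for $b^k$, even though the displayed scheme \eqref{3.3} carries the opposite (typographically erroneous) sign.
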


\begin{proof}
\medskip\noindent
For a fixed time level $k$ introduce the interior vector
$u^{k}=(u_1^{k},\dots,u_{N-1}^{k})^{\top}$ and write the graded scheme \eqref{3.3} in matrix form
\begin{equation}\label{eq:matrix-graded}
   A^{k}u^{k}=b^{k}.
\end{equation}
The coefficient matrix $A^{k}\in\mathbb{R}^{(N-1)\times(N-1)}$ is tridiagonal with
\[
  a^{k}_{ii}= \frac{d_{k,k}}{\Gamma(2-\alpha)}
                +\frac{2}{h^{2}} + p(t_k),\qquad
  a^{k}_{i,i\pm1}=-\frac{1}{h^{2}},\qquad
  d_{k,k}=\tau_k^{-\alpha},\ \ \tau_k:=t_k-t_{k-1}.
\]
Because $p(t_k)\ge 0$ we have 
$a^{k}_{ii} > |a^{k}_{i,i-1}|+|a^{k}_{i,i+1}|$,  
so $A^{k}$ is a \emph{strictly diagonally dominant $M$–matrix}; see~\cite[Chap.~6]{Varga09}.  
Consequently,
\begin{equation}\label{eq:M-properties-graded}
  A^{k}\ \text{is nonsingular and } (A^{k})^{-1}\ge 0\ \text{(entry-wise)}.
\end{equation}

Taking maximum norms in \eqref{eq:matrix-graded} and using \eqref{eq:M-properties-graded} yields
\begin{equation}\label{eq:linfty-step1-graded}
   \|u^{k}\|_{\infty}\le
   \|(A^{k})^{-1}\|_{\infty}\,\|b^{k}\|_{\infty}.
\end{equation}
Since $A^{k}$ is strictly diagonally dominant with diagonal dominance
$a^{k}_{ii}-\sum_{j\neq i}|a^{k}_{ij}|=d_{k,k}/\Gamma(2-\alpha)+p(t_k)$,
we have
\begin{equation}\label{eq:invbound-graded}
   \|(A^{k})^{-1}\|_{\infty}
   \le \frac{\Gamma(2-\alpha)}{d_{k,k}}
   = \Gamma(2-\alpha)\,\tau_k^{\alpha};
   \quad \text{see \cite[Thm.~6.4]{Varga09}.}
\end{equation}

\medskip\noindent
\emph{Estimate of the right–hand side.}
The vector $b^k$ is given by
\[
 b^{k}=
 \frac{1}{\Gamma(2-\alpha)}
 \Bigl[d_{k,1}\,u^{0}
       +\sum_{j=1}^{k-1}\big(d_{k,k-j+1}-d_{k,k-j}\big)\,u^{\,k-j}\Bigr]
  +f^{\,k},
\]
where $d_{k,j}$ is defined as in \eqref{3.1} and the differences
$\delta_{k,j}:=d_{k,k-j+1} - d_{k,k-j}\ge 0$ because $d_{k,j}$ is increasing in $j$.
Hence
\begin{equation}\label{eq:b-bound-graded}
  \|b^{k}\|_{\infty}\le
  \frac{d_{k,1}}{\Gamma(2-\alpha)}\,\|\varphi\|_{\infty}
 +\frac{d_{k,k}-d_{k,1}}{\Gamma(2-\alpha)}
     \max_{1\le m\le k-1}\|u^{m}\|_{\infty}
 +\|f^{\,k}\|_{\infty},
\end{equation}
since $\sum_{j=1}^{k-1}\delta_{k,j}=d_{k,k}-d_{k,1}$.

\medskip\noindent
\emph{Induction on $k$.}
Combining \eqref{eq:linfty-step1-graded}–\eqref{eq:b-bound-graded} gives
\[
  \|u^{k}\|_{\infty}
  \le \underbrace{\tau_k^{\alpha} d_{k,1}}_{=: \Theta_k}\,\|\varphi\|_{\infty}
       +\bigl(1-\Theta_k\bigr)
         \max_{1\le m\le k-1}\|u^{m}\|_{\infty}
       +\Gamma(2-\alpha)\tau_k^{\alpha}\,\|f^{\,k}\|_{\infty},
\]
because $\tau_k^\alpha(d_{k,k}-d_{k,1})=1-\tau_k^\alpha d_{k,1}$. Note that $0<\Theta_k\le1$
(since $d_{k,1}\le d_{k,k}=\tau_k^{-\alpha}$). For $k=1$, $\Theta_1=1$ and
$\|u^{1}\|_\infty\le \|\varphi\|_\infty+\Gamma(2-\alpha)\tau_1^\alpha\|f^{\,1}\|_\infty$.
Assume now that
$\|u^{m}\|_{\infty}\le \|\varphi\|_{\infty}
       +C_\alpha\max_{1\le\ell\le m}\|f^{\ell}\|_{\infty}$
holds for all $m\le k-1$. Then
\[
  \|u^{k}\|_{\infty}
  \le \|\varphi\|_{\infty}
     +\Bigl[(1-\Theta_k)C_\alpha+\Gamma(2-\alpha)\tau_k^{\alpha}\Bigr]
       \max_{1\le \ell\le k}\|f^{\ell}\|_{\infty}.
\]
Choosing $C_\alpha$ so that
$C_\alpha \ge \Gamma(2-\alpha)\tau_k^\alpha/\Theta_k$ for all $k$ yields
\[
  \|u^{k}\|_{\infty}\le
     \|\varphi\|_{\infty}
     +C_\alpha\max_{1\le \ell\le k}\|f^{\ell}\|_{\infty}.
\]
Since $\Theta_k=\tau_k^\alpha d_{k,1}$, one may take
\(
C_\alpha=\sup_k \Gamma(2-\alpha)/d_{k,1}=\Gamma(2-\alpha)/\underline{d}_1.
\)
Finally, by the mean value theorem $d_{k,1}=(1-\alpha)\xi^{-\alpha}$ with
$\xi\in(0,T]$, hence $\underline{d}_1\ge(1-\alpha)T^{-\alpha}$ and the simplified bound
\eqref{eq:Linfty-bound-graded-simple} follows. This completes the proof.
\end{proof}

\paragraph{Assumption.}
Let $0<\alpha<1$, $p\in C([0,T])$ with $p\ge0$, $\varphi\in H^2(0,\ell)\cap H_0^1(0,l)$,
and $f\in C(\overline{Q}_T)$.
Assume the exact solution $u$ of \eqref{1.1}--\eqref{1.3}
satisfies, for all $(x,t)\in[0,l]\times(0,T]$,
\[
\bigl|\frac{\partial^{\,k}u}{\partial x^{k}}(x,t)\bigr|\le C \quad \text{for } k=0,1,2,3,4,
\qquad
\bigl|\frac{\partial^{\,\ell}u}{\partial t^{\ell}}(x,t)\bigr|\le C\bigl(1+t^{\alpha-\ell}\bigr)
\quad  \text{for } \ell=0,1,2.
\]
These capture the start-up singularity $u_t\sim t^{\alpha-1}$ and are standard for graded-mesh L1 analysis \cite{Stynes17}.

\begin{theorem}[Convergence]\label{thm:convergence-graded}
Let $t_k=T(k/M)^r$ with $r\ge1$ and $\tau_k=t_k-t_{k-1}$. Consider the fully implicit scheme \eqref{3.3} with the L1 discretisation of the Caputo derivative on $\{t_k\}_{k=0}^{M}$.  Suppose the above regularity assumption holds. Then, the numerical solution converges to the exact solution with the error bound:
\[
\max_{0\le k\le M}\|u(\cdot,t_k)-u^k\|_\infty
\;\le\;
C\Big(M^{-\min\{\,2-\alpha,\;r\alpha\,\}} + h^2\Big),
\]
where  $C>0$  is a constant independent of $h$ and $M$.
\end{theorem}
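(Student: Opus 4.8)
The plan is the classical three-step programme for finite-difference error analysis — error equation, consistency (truncation) estimates, discrete stability — with the memory-aware refinements that the graded L1 discretisation forces. Set $e_i^k:=u(x_i,t_k)-u_i^k$. Subtracting \eqref{3.3} from the identity obtained by evaluating \eqref{1.1} at $(x_i,t_k)$ shows that $\{e_i^k\}$ satisfies the \emph{same} scheme with $e_i^0=0$, $e_0^k=e_N^k=0$, and forcing term $\Psi_i^k:=\psi^{k}_{t,i}+\psi^{k}_{x,i}$, where $\psi^{k}_{x,i}=u_{xx}(x_i,t_k)-h^{-2}\big(u(x_{i+1},t_k)-2u(x_i,t_k)+u(x_{i-1},t_k)\big)$ and $\psi^{k}_{t,i}$ is the L1 quadrature error for $\partial_t^\alpha u(x_i,t_k)$. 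The spatial part is routine: a fourth-order Taylor expansion together with the assumed bound $|\partial_x^4u|\le C$ gives $|\psi^{k}_{x,i}|\le\tfrac{C}{12}h^2$ uniformly in $i,k$.

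The temporal part is the technical heart, and here I would follow the graded-mesh L1 analysis of Stynes, O'Riordan and Gracia \cite{Stynes17}. Using $|\partial_t^2u(x,t)|\le C(1+t^{\alpha-2})$, one bounds the error of the L1 rule $\sum_{j=1}^k d_{k,j}\big(u(x_i,t_j)-u(x_i,t_{j-1})\big)$ against $\partial_t^\alpha u(x_i,t_k)$ by splitting the history into the first subinterval $(t_0,t_1)$, which carries the start-up singularity, and the later subintervals $(t_{j-1},t_j)$, $j\ge2$, on which $\partial_t^2u$ is already integrable. On $(t_0,t_1)$ one uses the interpolation estimate for the singular profile together with $\tau_1=TM^{-r}$; on the later intervals one uses the standard linear-interpolation quadrature bound weighted against the kernel $(t_k-s)^{-\alpha}$ and the mesh spacing $\tau_j\sim rTM^{-r}(j/M)^{r-1}$. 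The outcome is a pointwise bound of the form $|\psi^{k}_{t,i}|\le C\,k^{-\min\{2-\alpha,\,r\alpha\}}$, $1\le k\le M$; the grading exponent $r$ enters precisely here, and the choice $r=(2-\alpha)/\alpha$ equalises the two exponents, producing the best temporal rate $M^{-(2-\alpha)}$.

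The last and most delicate step is to convert these consistency estimates into the asserted bound on $\max_k\|e^k\|_\infty$, and this is where I expect the main obstacle. The difficulty is that $|\psi^{k}_{t,i}|$ is \emph{not} uniformly of size $M^{-\min\{2-\alpha,r\alpha\}}$ — at the first few time levels it is only $O(1)$ — so a direct appeal to the stability Theorem~\ref{thm:stability} (which would only give $\|e^k\|_\infty\le C_\alpha\max_{m\le k}\|\Psi^m\|_\infty$) destroys all the gain in $M$; one must instead use the decay of the L1 weights $d_{k,j}$. I would exploit the discrete comparison principle supplied by the $M$-matrix property of $A^k$ — already established inside the proof of Theorem~\ref{thm:stability}, since $(A^k)^{-1}\ge0$ — and construct a nonnegative mesh function $w_i^k$, of the form $C\big(h^2+M^{-\min\{2-\alpha,r\alpha\}}\,\Theta_k\big)$ up to a spatial factor $x_i(l-x_i)$, where $\Theta_k$ is a calibrated combination of power functions $t_k^{\gamma}$ whose discrete Caputo derivative $\tfrac{1}{\Gamma(2-\alpha)}\sum_j d_{k,j}(\Theta_j-\Theta_{j-1})$ dominates $k^{-\min\{2-\alpha,r\alpha\}}$ at every level. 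This requires sharp two-sided estimates for the action of the graded-mesh L1 operator on $\{t_k^\gamma\}$, reproducing the continuous identity $\partial_t^\alpha t^\gamma=\tfrac{\Gamma(1+\gamma)}{\Gamma(1+\gamma-\alpha)}t^{\gamma-\alpha}$ up to constants; with such a $w$ the comparison principle yields $|e_i^k|\le w_i^k$ by induction on $k$, hence the claim after combining the two error terms. An equivalent route, which I would mention as a fallback, is to apply the fractional discrete Grönwall inequality of Liao–McLean–Zhang once its complementary-kernel hypotheses are verified for the weights $d_{k,j}$; either way, the crux is the careful bookkeeping of the fractional memory that absorbs the $O(1)$ first-step consistency error without degrading the global rate.
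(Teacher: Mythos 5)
Your proposal follows the same skeleton as the paper's proof---error equation for $e_i^k$, second-order spatial truncation from $|\partial_x^4u|\le C$, and the $M$-matrix/nonnegative-inverse property of $A^k$---but it diverges at the decisive step, and the divergence is in your favour. The paper asserts in \eqref{eq:trunc-graded} that the truncation error satisfies $\|\xi^k\|_\infty\le C_0\big(M^{-\min\{2-\alpha,\,r\alpha\}}+h^2\big)$ \emph{uniformly in} $k$, and then closes the argument with the same convex-combination recursion used in Theorem~\ref{thm:stability}. You instead insist that the temporal consistency error is only level-wise of size $k^{-\min\{2-\alpha,\,r\alpha\}}$, hence $O(1)$ at the first few time levels, so that a direct appeal to the stability bound $\|e^k\|_\infty\le C_\alpha\max_m\|\xi^m\|_\infty$ would destroy the rate, and one must exploit the decay of the L1 weights via a discrete barrier function or a fractional discrete Gr\"onwall inequality of the type in \cite{Liao21}. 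Your concern is well founded: the uniform bound claimed in \eqref{eq:trunc-graded} fails at $k=1$---already for the model profile $u=t^\alpha$, which satisfies the stated regularity assumption, the L1 error at $t_1$ equals the fixed constant $|\Gamma(1+\alpha)-1/\Gamma(2-\alpha)|$ independently of $M$---and what \cite{Stynes17} actually proves is the level-dependent estimate together with a weighted stability lemma of exactly the kind you describe. So the paper's proof, as written, has a gap precisely where you predicted one, and your route (comparison against a calibrated mesh function built from powers $t_k^{\gamma}$, or verification of the complementary-kernel hypotheses for the weights $d_{k,j}$) is the one that genuinely delivers the stated rate. The only caveat on your side is that the crucial technical ingredient---the two-sided estimates for the action of the graded-mesh L1 operator on $\{t_k^\gamma\}$ that make the barrier work---is announced rather than carried out; it is nontrivial, but it is available essentially verbatim in \cite{Stynes17}, and the extra reaction term $p^k u_i^k$ with $p^k\ge 0$ only strengthens the comparison principle, so your outline is close to a complete and correct proof.
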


\begin{proof}
Let \( e_i^k = u(x_i, t_k) - u_i^k \) denote the error at the grid point \( (x_i, t_k) \).
Using the graded L1 formula and subtracting the discrete scheme from the PDE at the nodes, we obtain the error equation
\begin{equation}\label{eq:error-eqn-graded}
- \frac{1}{h^2} e_{i+1}^k
+ \Big(\frac{d_{k,k}}{\Gamma(2-\alpha)} + \frac{2}{h^2} + p^k\Big) e_i^k
- \frac{1}{h^2} e_{i-1}^k
=
\frac{1}{\Gamma(2-\alpha)}  \sum_{j=1}^{k-1}\!\bigl(d_{k,j+1}-d_{k,j}\bigr)\,e^{\,k-j}_i
\;+\; \xi_i^k ,
\end{equation}
where \( \xi_i^k \) is the truncation error and the $e^0$-term vanishes because  $u_i^0 = \varphi(x_i)$ gives $e_i^0 =0$.
The truncation estimate
\begin{equation}\label{eq:trunc-graded}
\|\xi^k\|_\infty \;\le\; C_0 \Big(M^{-\min\{2-\alpha,\;r\alpha\}} + h^2\Big),\qquad 1\le k\le M.
\end{equation}
holds uniformly in $k$ by the time-weighted regularity and graded-mesh $L1$ analysis \cite{Stynes17}.

Collecting the interior components of the equation \eqref{eq:error-eqn-graded}, we obtain the matrix form
\[
   A^{(k)}e^k
   = \frac{1}{\Gamma(2-\alpha)}
     \sum_{j=1}^{k-1}\!\bigl(d_{k,j+1} - d_{k,j}\bigr)e^{\,k-j}
     +\xi^k ,
\]
where $A^{(k)}$ is the strictly diagonally dominant $M$–matrix from Theorem~\ref{thm:stability} with
$d_{k,k}=\tau_k^{-\alpha}$ and $\tau_k=t_k-t_{k-1}$.
Hence $(A^{(k)})^{-1}\ge0$ and, by a discrete maximum-principle argument,
\begin{equation}\label{eq:inv-graded}
\bigl\|(A^{(k)})^{-1}\bigr\|_\infty
\;\le\; \frac{\Gamma(2-\alpha)}{d_{k,k}}
\;=\; \Gamma(2-\alpha)\,\tau_k^{\alpha}.
\end{equation}

Taking  $\|\cdot\|_\infty$ norms and using \eqref{eq:inv-graded} gives
\[
  \|e^k\|_\infty
  \;\le\; \tau_k^{\alpha}
      \sum_{j=1}^{k-1}\!\bigl(d_{k,j+1} - d_{k,j}\bigr)\,\|e^{\,k-j}\|_\infty
      + \Gamma(2-\alpha)\,\tau_k^{\alpha}\,\|\xi^k\|_\infty .
\]
Define
\(
   E_k:=\max_{0\le m\le k}\|e^{m}\|_\infty
\)
and note that $\sum_{j=1}^{k-1}(d_{k,j+1} - d_{k,j}) = d_{k,k}-d_{k,1}$.
Then
\[
  E_k
  \;\le\;\Big(1-\Theta_k\Big)\,E_{k-1}
          +\Gamma(2-\alpha)\,\tau_k^{\alpha}\,\|\xi^k\|_\infty,
  \qquad
  \Theta_k:=\tau_k^{\alpha}\,d_{k,1}\in(0,1].
\]
Using \eqref{eq:trunc-graded} and $\tau_k^\alpha\le T^\alpha$,
\[
  E_k \;\le\; (1-\Theta_k)E_{k-1}
  + \widetilde C \Big(M^{-\min\{2-\alpha,\;r\alpha\}} + h^2\Big),
  \quad \widetilde C:=\Gamma(2-\alpha)T^\alpha C_0 .
\]
Since $0<1-\Theta_k<1$ and $\{\Theta_k\}$ is nonnegative, a variable-coefficient discrete
Grönwall yields
\[
  E_k \;\le\; C \Big(M^{-\min\{2-\alpha,\;r\alpha\}} + h^2\Big),
  \qquad 0\le k\le M,
\]
with $C$ independent of $h$ and $M$. Finally,
\(
\|u(\cdot,t_k)-u^{k}\|_\infty=\|e^{k}\|_\infty\le E_k
\),
which completes the proof.
\end{proof}

\section{Determination of the unknown coefficient $p(t)$}\label{5}
Consider the inverse problem governed by the diffusion equation \eqref{1.1}
subject to homogeneous Dirichlet boundary conditions $u(0,t)=u(l,t)=0$ for $t\in[0,T]$ and an initial condition $u(x,0)=\varphi(x)$ for $x\in[0,l]$. The function $p(t)$ depends only on $t$ and can be identified from the additional integral overdetermination condition \eqref{1.4}.
To derive a formula for $p(t)$, we first apply the Caputo fractional derivative 
$\partial^{\alpha}_{t}$ to the integral condition \eqref{1.4}.

We assume that \( f(x,t) \in C(\overline{Q}_T) \), \( \varphi(x) \in C[0,l] \), and \( g(t) \in C^1[0,T] \), with \( \partial_t^\alpha g(t) \in C[0,T] \), ensuring that all terms in the inverse problem formulation are well-defined.

\[
\partial^{\alpha}_{t} \Bigl(\int_{0}^{l} u(x,t)\omega(x)\ dx \Bigr)
= \int_{0}^{l} \partial^{\alpha}_{t} u(x,t)\,\omega(x)\\,dx = \partial^{\alpha}_{t} g(t).
\]
By substituting the PDE \eqref{1.1}, namely 
\(
\partial^{\alpha}_{t} u = u_{xx} \;-\; p(t)\,u \;+\; f(x,t),
\)
into the above integral, one obtains
\[
\partial^{\alpha}_{t} g(t) = \int_{0}^{l} u_{xx}(x,t)\,\omega(x)\,dx 
- p(t) \int_{0}^{l} u(x,t)\,\omega(x)\,dx 
+ \int_{0}^{l} f(x,t)\,\omega(x)\,dx.
\]
The integral $\int_{0}^{l} u_{xx}(x,t)\,\omega(x)\,dx $ is treated using the integrating by parts twice:
\[
\int_{0}^{l} u_{xx}(x,t)\,\omega(x)\,dx = [u_x \omega]^l_0-[u\omega']^l_0 +\int_{0}^{l} u(x,t)\,\omega''(x)\,dx.
\]
Given the boundary conditions \( u(0,t) = u(l,t) = 0 \), the terms involving \( u\omega' \) vanish. Thus, we have:
\[
\int_{0}^{l} u_{xx}(x,t)\,\omega(x)\,dx =u_x(l,t)\omega(l)-u_x(0,t)\omega(0) +\int_{0}^{l} u(x,t)\,\omega''(x)\,dx.
\]
Noting that$\int_{0}^{l} u(x,t)\,\omega(x)\,dx = g(t)$, we rearrange to obtain
\begin{equation}
\label{Pformula}
p(t)=\frac{u_x(l,t)\omega(l)-u_x(0,t)\omega(0) +\int_{0}^{l} u(x,t)\,\omega''(x)\,dx
+\int_{0}^{l} f(x,t)\,\omega(x)\,dx - \partial^{\alpha}_{t} g(t)}{g(t)}.
\end{equation}
here, we have assumed that $g(t) \neq 0$.
The discrete form of formula \eqref{Pformula} is obtained using second-order one-sided finite differences for spatial derivatives at the boundaries, and composite Simpson’s rule for the integrals. The second derivative of the weight function is computed analytically, and the Caputo fractional derivative of $g(t)$ is evaluated exactly using its known analytical expression.

Hence, once $u(x,t)$  is computed at each time level using a numerical method, the coefficient $p(t)$ can be updated by evaluating the integrals and derivatives on the right-hand side of \eqref{Pformula}.
Using these equations above, Algorithm \ref{alg1} has been developed to simultaneously compute $u(x,t)$ and the unknown coefficient $p(t)$

\begin{algorithm}[H]
\caption{Identification of $\{p(t), u(x,t)\}$ in the time-fractional diffusion equation}
\label{alg1}
\begin{algorithmic}[1]
\Require Domain length $\,l\,$, final time $\,T$
\Require Fractional order $\,\alpha\in(0,1)$, grading exponent $\,r\ge 1$ (default $r=2-\alpha$)
\Require Number of spatial nodes \( N+1 \), number of time steps \( M+1 \)
\Require Known functions: $\,\varphi(x)$, $f(x,t)$, $\omega(x)$, $g(t)$, and $\,\partial_t^\alpha g(t)$
\State Spatial mesh: $h=l/N$, $x_i=i\,h$, $i=0,\dots,N$.
\State Temporal mesh: $t_k=T\,(k/M)^r$, $k=0,\dots,M$, with $\tau_k=t_k-t_{k-1}$ for $k\ge1$.
\State Initialise: $u_i^0=\varphi(x_i)$ for all $i$.
\State Compute \( p^0 \) using the integral formula~\eqref{Pformula}
\For{$k=1$ \textbf{to} $M$}
    \State Set current time $t_k$ and steps $\{\Delta t_j\}_{j=1}^k$.
    \State Compute non-uniform L1 weights $b_{k,j}$ for $j=1,\dots,k$:
  \[
     d_{k,j} \;=\; \frac{(t_k-t_{j-1})^{1-\alpha} - (t_k-t_j)^{1-\alpha}}{\Gamma(2-\alpha)\,\Delta t_j}\,.
  \]
  \State Construct the tridiagonal matrix \( A \in \mathbb{R}^{(N-1)\times(N-1)} \) and compute the right-hand side \( b_i \) from the finite difference scheme \eqref{3.3}
   \State Solve the linear system \( A u^k = b \) (using the Thomas method), and set boundary values \( u_0^k = u_N^k = 0 \)
    \State Update \( p^k \) using the integral expression~\eqref{Pformula}.
\EndFor
\Statex \textbf{Output:} Numerical solution \( \{ u_i^k \}_{i,k=0}^{N,M} \) and coefficient \( \{ p^k \}_{k=0}^{M} \)
\end{algorithmic}
\end{algorithm}

\section{Numerical experiments}\label{6}
To validate the theoretical analysis, we conducted a series of numerical experiments in this section to assess the accuracy and convergence behavior of the proposed method. The performance was evaluated using two standard error metrics: the maximum absolute error and the 
$L^2$ -norm error.
\subsection{Numerical example}
This subsection will test the proposed method by considering the following numerical example. Consider the fractional equation \eqref{1.1} 
 with the following source term:
\[
 f(x,t) = sin(\pi x) \biggl(\Gamma(2 - \alpha)\Gamma(1 + \alpha) + \pi^2\Gamma(2 - \alpha)(1+t^{\alpha}) + 1 \biggr)
\]
We assume that the domain is \( x \in [0,1] \) and \( t \in [0,1] \) and input data:     
\[
 u(x, 0) = \Gamma(2 - \alpha)sin(\pi x), \quad \omega(x)=sin(\pi x),\quad g(t) = \frac{1}{2}\Gamma(2 - \alpha)(1+t^{\alpha}). 
\]
The exact solution of equation \eqref{1.1} in this case is 
\[
u(x,t) = \Gamma(2 - \alpha)(1+t^{\alpha})sin(\pi x), \quad p(t) = \frac{1}{\Gamma(2 - \alpha)(1+t^{\alpha})}.
\]
Figures \ref{fig1},\ref{fig2}, and \ref{fig3} illustrate the comparisons between the analytical solution and the numerical results calculated for $\{p(t), u(x,t)\}$ with various values of $\alpha$.

\begin{figure}[h]
\centering
\includegraphics[width=0.8\textwidth]{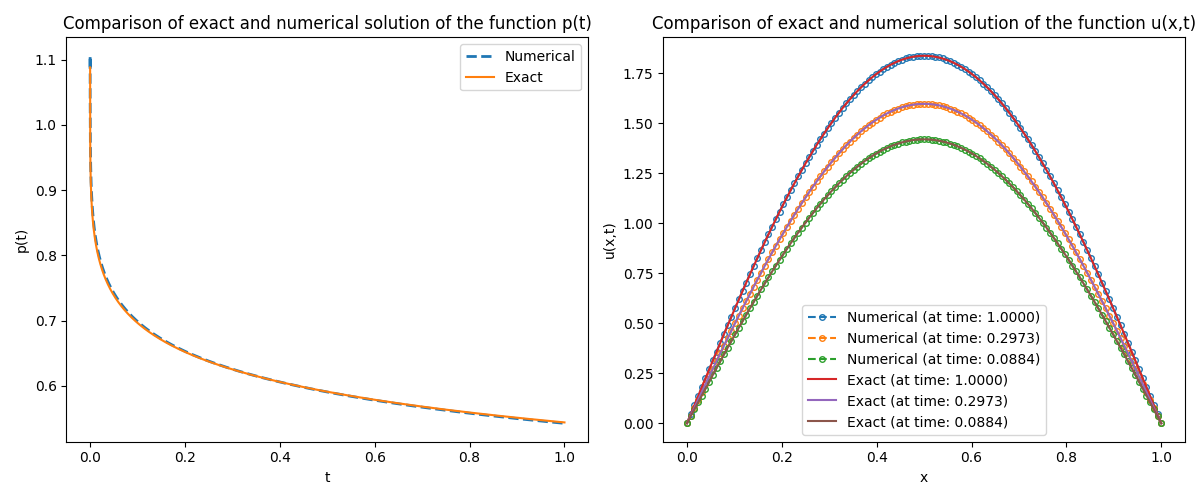}
\caption{The analytical and numerical solutions of $\{p(t), u(x,t)\}$ when $\alpha=0.25$.}
\label{fig1}
\end{figure}
\begin{figure}[h]
\centering
\includegraphics[width=0.8\textwidth]{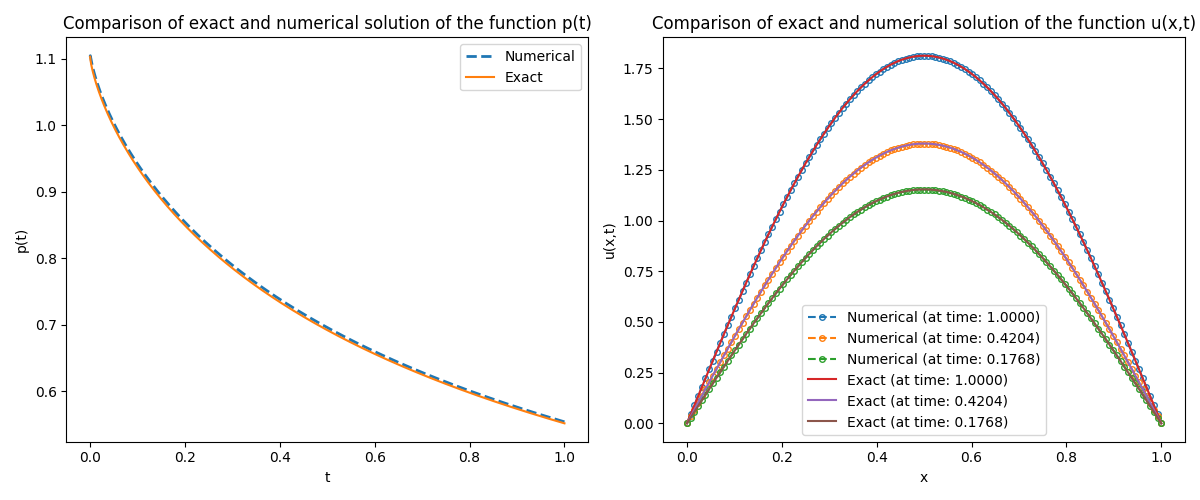}
\caption{The analytical and numerical solutions of $\{p(t), u(x,t)\}$ when  $\alpha=0.75$.}
\label{fig2}
\end{figure}
\begin{figure}[h]
\centering
\includegraphics[width=0.8\textwidth]{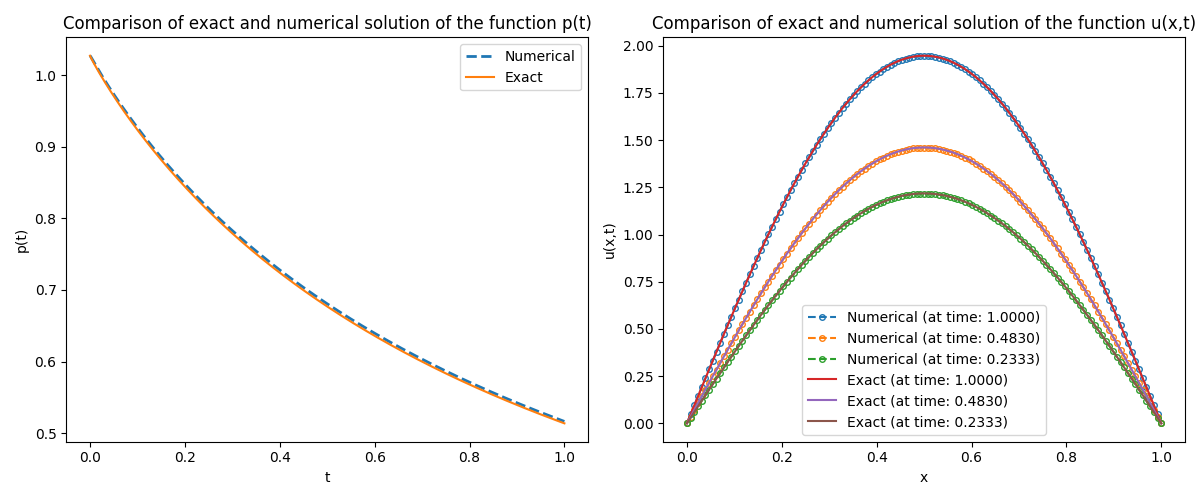}
\caption{The analytical and numerical solutions of ${p(t),u(x,t)}$ when  $\alpha=0.95$.}
\label{fig3}
\end{figure}
Figure~\ref{fig4} illustrates the numerical solutions of $\{p(t), u(x,t)\}$ corresponding to different values of the fractional order $\alpha$. 

An interesting non-monotonic behavior is observed: the amplitude of the solution decreases as $\alpha$ increases from $0.1$ to $0.5$, and then increases again from $\alpha = 0.5$ to $\alpha = 0.9$. This pattern reflects the interplay between diffusion and memory effects inherent in fractional-order models. For small values of $\alpha$, the system is dominated by strong memory, resulting in a slower diffusion process and higher retention of the initial condition, which leads to a larger amplitude. As $\alpha$ increases, the memory effect weakens and diffusion becomes more pronounced, initially reducing the amplitude. However, beyond a certain threshold, the balance between memory and diffusion shifts again, leading to an increase in the solution magnitude. This behavior highlights the complex dynamics governed by the fractional order and its impact on the physical evolution of the system.

\begin{figure}[h]
\centering
\includegraphics[width=0.95\textwidth]{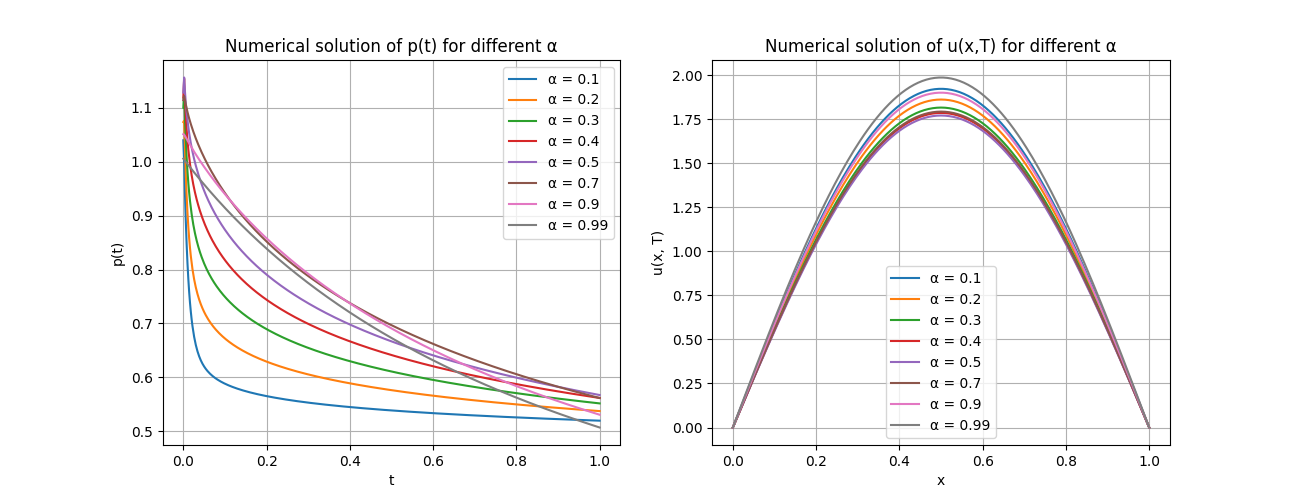}
\caption{The numerical solutions of ${p(t),u(x,t)}$ when $ T=1$ and at different $\alpha$.}
\label{fig4}
\end{figure}

Figure \ref{fig5} presents surface plots of the exact solution and the numerical approximation of $u(x,t)$ at different $\alpha$. The results are shown for a uniform grid with $N=M=200$.

\begin{figure}[h]
  \centering
  \begin{minipage}{\linewidth}
    \centering
    \includegraphics[width=0.7\linewidth]{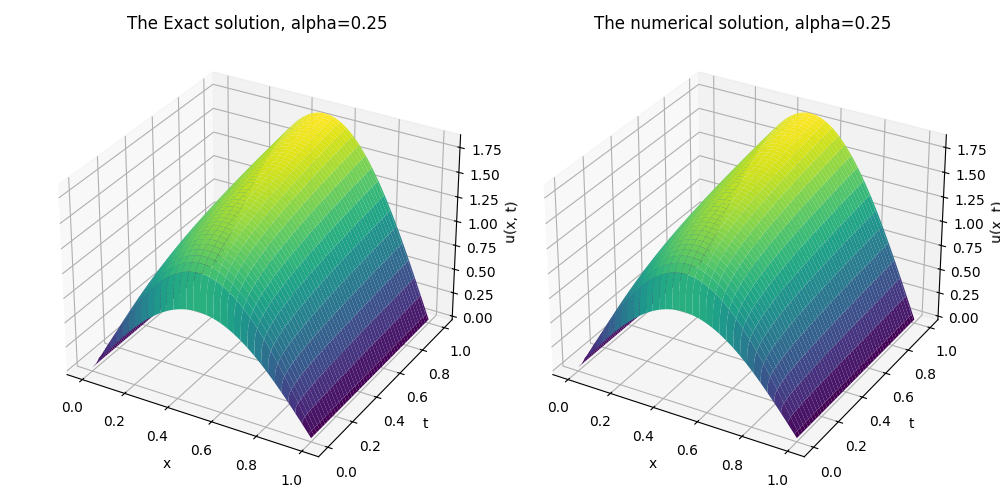}
    \label{a}
  \end{minipage}\quad
  \begin{minipage}{\linewidth}
    \centering
    \includegraphics[width=0.7\linewidth]{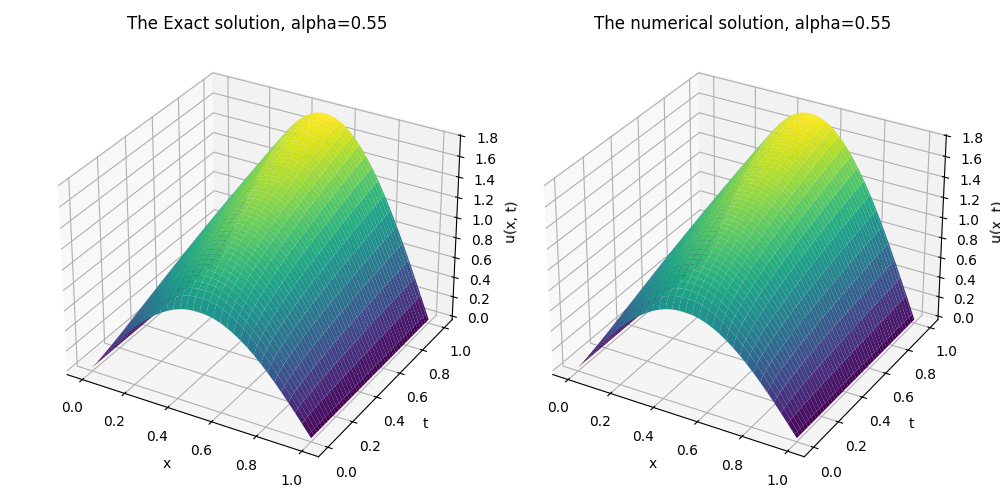}
    b
  \end{minipage}\quad
  \begin{minipage}{\linewidth}
    \centering
    \includegraphics[width=0.7\linewidth]{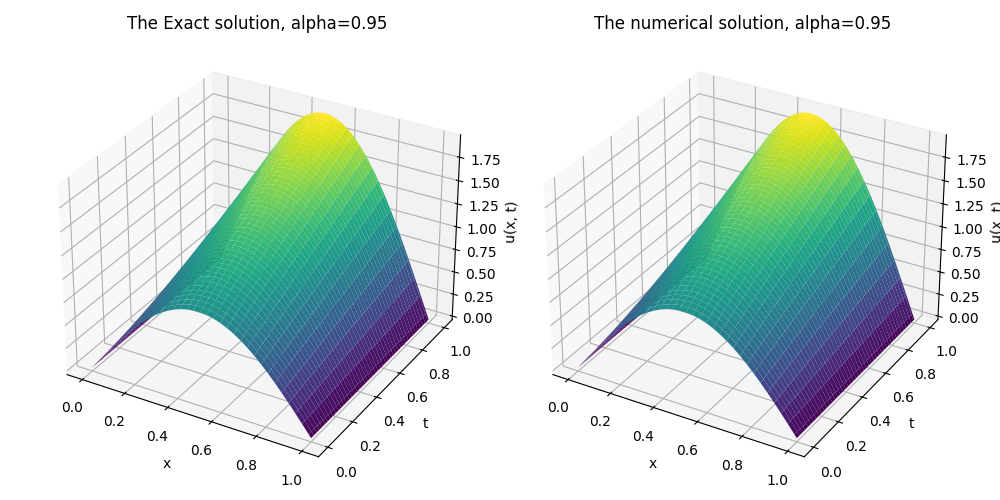}
    c
  \end{minipage}
  \caption{Comparison of the exact (left) and numerical (right) solutions for $u(x,t)$ at different values of the fractional order $\alpha$.}
  \label{fig5}
\end{figure}

Tables~\ref{table1.1} and \ref{table1.2} report the spatial and temporal convergence results obtained with Algorithm~\ref{alg1}. In Table \ref{table1.1}, spatial convergence is evaluated with a fixed number of time levels \(M=1000\).
As the number of spatial grid points \(N\) increases (i.e., \(h=1/N\) decreases), both the max-norm and \(L^2\)-norm errors for the numerical solution \(u(x,t)\) decay steadily, exhibiting second-order convergence in space. These results validate the accuracy of the numerical approach in line with theoretical expectations.

\begin{table}[htbp]
\centering
\caption{Maximum norm and \(L^2\)-norm errors versus the number of spatial grid points \(N\) (time levels fixed at \(M=1000\)).}
\begin{tabular}{c c c c c c}
\hline
$\alpha$ & $N$ & maximum error & $CO_h$ &  $L^2$ error & $  CO_h$ in $L^2$ \\
\hline
\multirow{3}{*}{0.35} 
& 16 & 4.582e-02 &         & 3.240e-02 &         \\
& 32 & 9.852e-03 & 2.218      &  6.967e-03 &  2.218  \\
& 64 & 2.304e-03 & 2.097       & 1.629e-03 & 2.097  \\
& 128 & 4.827e-04 & 2.255        & 3.413e-04 & 2.255   \\
\hline
\multirow{3}{*}{0.5} 
& 16 & 4.246e-02 &         & 3.003e-02 &         \\
& 32 & 9.192e-03 & 2.208      & 6.500e-03 &  2.208   \\
& 64 & 1.987e-03 & 2.210      & 1.405e-03 &  2.210   \\
& 128 & 2.402e-04 & 3.048       & 1.698e-04 & 3.048   \\
\hline
\multirow{3}{*}{0.75} 
& 16 & 3.925e-02 &         &  2.776e-02 &         \\
& 32 &  8.677e-03 &  2.178     & 6.135e-03 &  2.178   \\
& 64 & 1.759e-03 & 2.302  & 1.244e-03 & 2.302   \\
& 128 & 6.914e-05 & 4.669     & 4.889e-05 & 4.669   \\
\hline
\end{tabular} \label{table1.1}
\end{table}

In Table~\ref{table1.2}, temporal convergence is assessed with a fixed spatial grid \(N=128\) on a graded time mesh \(t_k=T(k/M)^r\). As the number of time levels \(M\) increases (so the maximum local time step decreases), the errors decrease monotonically, with empirical rates close to \(O\!\big((\max_k \Delta t_k)^{\,2-\alpha}\big)\). These results confirm that the graded-mesh discretisation achieves the expected temporal accuracy for the chosen test problem.

\begin{table}[htbp]
\centering
\caption{Maximum norm and \(L^2\)-norm errors versus the number of time levels \(M\) (spatial grid fixed at \(N=128\)).}
\begin{tabular}{c c c c c c}
\hline
$\alpha$ & $M$ & maximum error & $CO_\tau$ &  $L^2$ error & $  CO_\tau$ in $L^2$ \\
\hline
\multirow{3}{*}{0.35} 
& 64 & 3.772e-03 &        &  2.667e-03 &    \\
& 128 & 1.458e-03 & 1.372  & 1.031e-03 & 1.372    \\
& 256 & 3.939e-04 & 1.888      & 2.786e-04 &  1.888   \\
& 512 & 1.136e-04 & 1.794     & 8.033e-05 & 1.794   \\

\hline
\multirow{3}{*}{0.5} 
& 64 & 5.292e-03 &         &  3.742e-03 &    \\
& 128 & 2.242e-03 & 1.239      & 1.585e-03 &   1.239   \\
& 256 & 7.930e-04 &   1.499    & 5.608e-04  &   1.499 \\
& 512 & 9.342e-05 &    3.086  & 6.606e-05 &   3.086 \\
\hline
\multirow{3}{*}{0.75} 
& 64 & 7.436e-03 &         & 5.258e-03 &    \\
& 128 & 3.432e-03 &    1.116     & 2.427e-03 &   1.116    \\
& 256 & 1.416e-03 & 1.277     &  1.001e-03 & 1.277   \\
& 512 & 4.145e-04 & 1.773    & 2.931e-04 & 1.773  \\

\hline
\end{tabular} \label{table1.2}
\end{table}
The accuracy of the reconstructed coefficient $p(t)$, as given by formula~\eqref{Pformula}, depends on multiple factors: the precision of the numerical solution $u(x,t)$, the finite difference approximations of the spatial derivatives $u_x(0,t)$ and $u_x(l,t)$, and the numerical evaluation of the integral terms. Inaccuracies in any of these components can influence the quality of the reconstructed coefficient.

\begin{table}[H]
\centering
\caption{Maximum and $L^2$ errors in the identified coefficient $p(t)$ for different values of $\alpha$ and grid resolutions $N = M$.}
\begin{tabular}{cccc}
\hline
$\alpha$ & $N = M$ & maximum error in $p$ & $L^2$ error in $p$ \\
\hline
0.25 & 128 & 1.369e-01 & 1.393e-02 \\
     & 256 & 1.082e-01 & 6.763e-03 \\
     & 512 & 8.472e-02 & 3.270e-03 \\
\hline
0.50 & 128 & 4.837e-02 & 1.844e-02 \\
     & 256 & 3.032e-02 & 9.223e-03 \\
     & 512 & 1.865e-02 & 4.545e-03 \\
\hline
0.75 & 128 & 2.394e-02 & 2.187e-02 \\
     & 256 & 1.212e-02 & 1.130e-02\\
     & 512 & 6.003e-03 & 5.673e-03 \\
\hline
0.95 & 128 & 1.943e-02 & 1.757e-02 \\
     & 256 & 1.054e-02 & 9.552e-03 \\
     & 512 & 5.504e-03 & 5.006e-03 \\
\hline
\end{tabular}
\label{table1.3}
\end{table}

As shown in Table~\ref{table1.3}, the maximum and $L^2$ errors in the identified coefficient $p(t)$ decrease with mesh refinement for all values of $\alpha$. The accuracy improves as $\alpha$ increases, reflecting the enhanced stability and convergence of the method when approaching the classical diffusion limit. \noindent
The relatively lower accuracy in the reconstruction of $p(t)$ is primarily attributed to the singular behavior of the solution near $t = 0$, which is characteristic of time-fractional diffusion equations. This singularity affects the numerical evaluation of both the fractional derivative and the integral terms, especially at early times. However, as observed in Table~\ref{table1.3}, the errors decrease with increasing $N$ and $M$, indicating that higher spatial and temporal resolutions can significantly improve the accuracy of $p(t)$.

Overall, the results demonstrate strong agreement between the numerical and exact solutions, as reflected by the small maximum and $L^2$ errors for both $u(x,t)$ and $p(t)$. These findings confirm the reliability and accuracy of the proposed method for solving the fractional diffusion equation with a time-dependent coefficient. In the following subsection, we further evaluate the robustness of the method under noisy data conditions.

\subsection{Effect of noisy data on the reconstruction of $p(t)$}

To evaluate the robustness of the proposed numerical method under practical conditions, we conducted a series of experiments in which the integral overdetermination data \( g(t) \) and its time-fractional derivative were contaminated with additive noise. Specifically, we considered the noisy data
\[
g^{\delta}(t) = g(t) + \delta \cdot \eta(t), \quad
\partial_t^{\alpha}g^{\delta}(t) = \partial_t^{\alpha}g(t) + \delta \cdot \xi(t),
\]
where \( \delta \in \{0.01,\, 0.03,\, 0.05\} \) represents the relative noise level (1\%, 3\%, and 5\%, respectively), and \( \eta(t),\, \xi(t) \) are random perturbation functions generated to simulate measurement errors.

The noisy data \( g^{\delta}(t) \) and \( \partial_t^{\alpha}g^{\delta}(t) \) were used in place of the exact values in the coefficient recovery formula. The results are presented in Figures~\ref{fig:noise1}, \ref{fig:noise3}, and \ref{fig:noise5}, where each plot compares the exact and numerically reconstructed values of \( p(t) \) under different noise levels.

\begin{figure}[h]
    \centering
    \includegraphics[width=1.2 \textwidth]{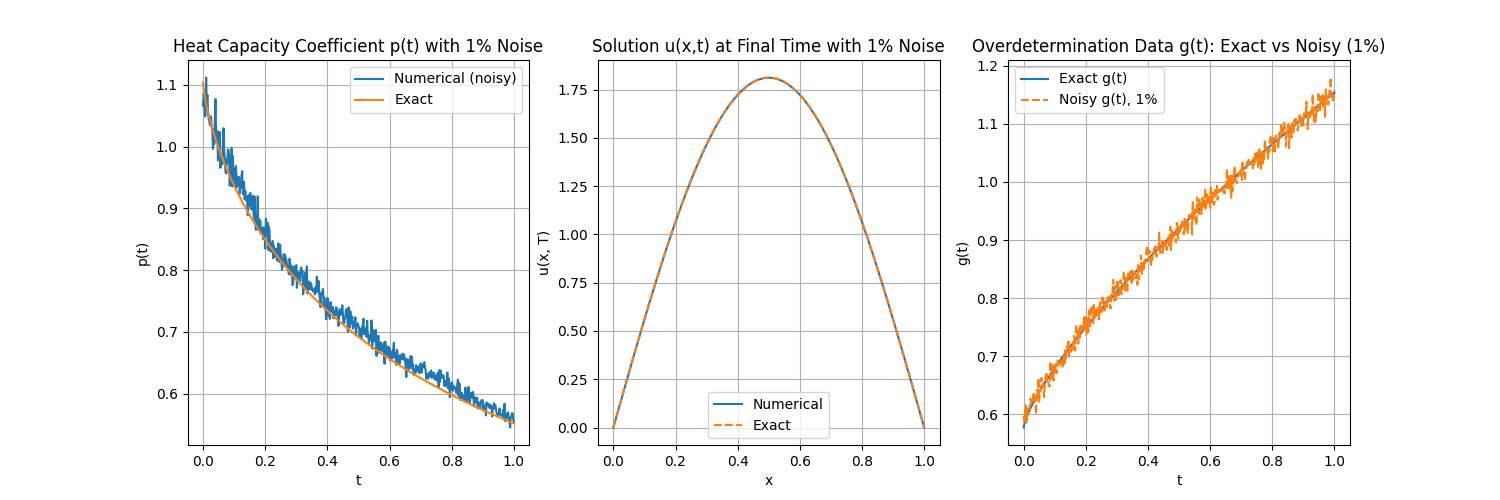}
    \caption{Solutions of \( p(t), u(x,t) \) with 1\% noisy overdetermination data.}
    \label{fig:noise1}
\end{figure}

\begin{figure}[h]
    \centering
    \includegraphics[width=1.2 \textwidth]{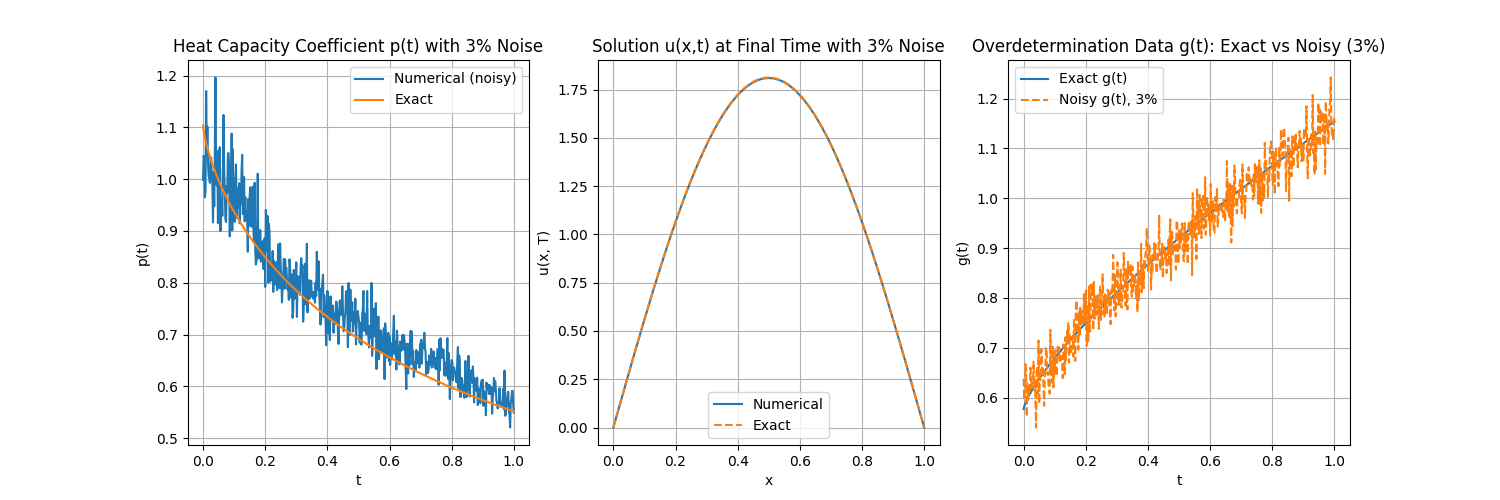}
    \caption{Solutions of \( p(t), u(x,t) \) with 3\% noisy overdetermination data.}
    \label{fig:noise3}
\end{figure}

\begin{figure}[h]
    \centering
    \includegraphics[width=1.2 \textwidth]{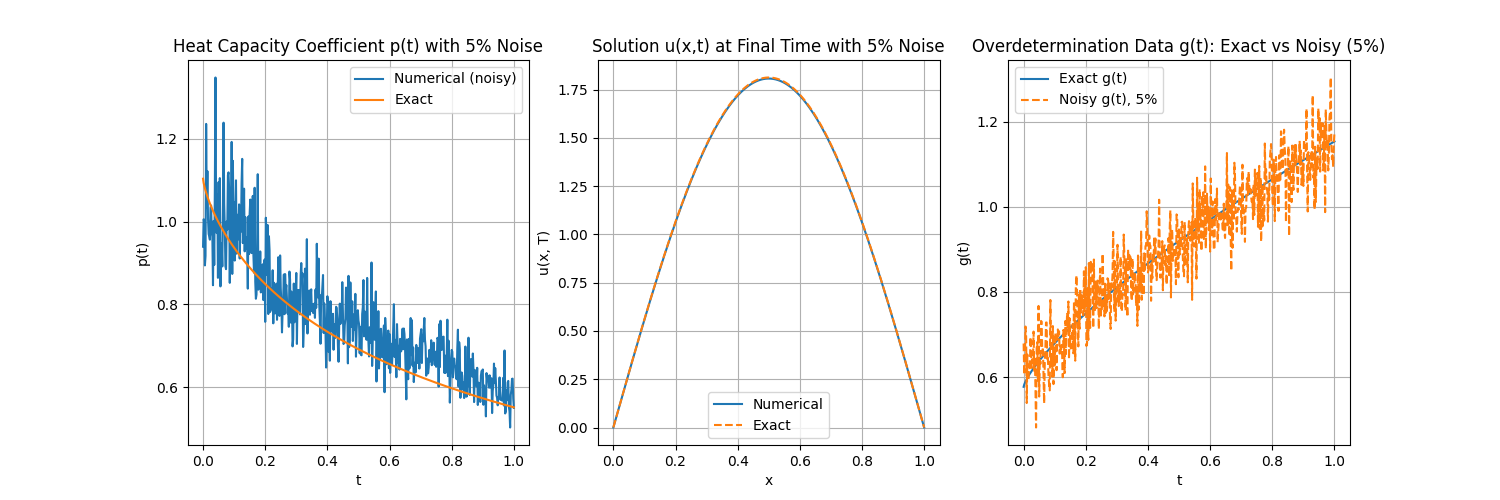}
    \caption{Solutions of \( p(t), u(x,t) \) with 5\% noisy overdetermination data.}
    \label{fig:noise5}
\end{figure}

The results show that the proposed method exhibits good stability with respect to data noise. For a low noise level of 1\%, the reconstructed coefficient \( p(t) \) closely follows the exact solution throughout the interval \( [0,1] \). As the noise level increases to 3\% and 5\%, the error in the reconstruction becomes more noticeable, especially near \( t=0 \), where the sensitivity of the fractional derivative is more pronounced. Nevertheless, even under 5\% noise, the overall trend of \( p(t) \) is captured accurately, and the method remains stable.

These findings confirm the method's capability to handle noisy data effectively, making it suitable for practical applications where exact measurements are rarely available.

\section{Conclusion}\label{7}
We proposed a robust numerical framework to recover the time-dependent coefficient in a time-fractional diffusion equation. Our method is supported by theoretical stability and convergence analysis and is validated by accurate numerical simulations. The results demonstrate strong performance even under noisy conditions, highlighting the potential of the method for practical applications.

\section*{Funding}
\noindent
The research is financially supported by a grant from the 
Ministry of Science and Higher Education of the Republic of Kazakhstan (No. AP27508473), by the FWO Research Grant G083525N: Evolutionary partial differential equations with strong singularities, and by the Methusalem programme of the Ghent University Special Research Fund (BOF) (Grant number 01M01021).


\end{document}